\newcommand{\EEQ}{\end{equation}}
\newcommand{\rfb}[1]{\mbox{\rm
   (\ref{#1})}\ifx\undefined\stillediting\else:\fbox{$#1$}\fi}
                         \newcommand{\ud}     {{\rm d}}
\newcommand{\bt}{\begin{Theorem}}
\newcommand{\et}{\end{Theorem}}
\newcommand{\br}{\begin{remark}}
\newcommand{\er}{\end{remark}}
\newcommand{\bc}{\begin{Corollary}}
\newcommand{\ec}{\end{Corollary}}
\newcommand{\el}{\end{Lemma}}
\newcommand{\bd}{\begin{definition}}
\newcommand{\ed}{\end{definition}}
\newcommand{\pd}[1]{\langle #1\rangle}
\newcommand{\N}  {\mathbb{N}}
\newcommand{\R}  {\mathbb{R}}
\newcommand{\mm}    {{\hbox{\hskip 0.5pt}}}
\newcommand{\bluff} {{\hbox{\raise 15pt \hbox{\mm}}}}
\newfont{\Blackboard}{msbm10 scaled 1200}
\newfont{\roma}{cmr10 scaled 1200}
\def\CC{\rm \hbox{C\kern-.56em\raise.4ex
         \hbox{$\scriptscriptstyle |$}\kern+0.5 em }}
\newtheorem{corollary}{Corollary}[section]
\newtheorem{definition}[corollary]{Definition}
\newtheorem{remark}[corollary]{Remark}
\numberwithin{equation}{section}
\newtheorem{lem}{Lemma}[section]
\newtheorem{prop}{Proposition}[section]
\newtheorem{thm}{Theorem}[section]
\newtheorem{rem}{Remark}[section]
\def\ds{\displaystyle}
\newcommand{\re}{\mathrm{Re}}
\begin{document}
\thispagestyle{empty}
  
\title[Stability of a degenerate thermoelastic equation]{Stability of a degenerate thermoelastic equation}

\author{Ka\"{\i}s AMMARI}
\address{LR Analysis and Control of PDEs, LR 22ES03, Department of Mathematics, Faculty of Sciences of Monastir, University of Monastir, 5019 Monastir, Tunisia and LMV/UVSQ/Paris-Saclay, France} \email{kais.ammari@fsm.rnu.tn}

\author{Fathi Hassine}
\address{LR Analysis and Control of PDEs, LR 22ES03, Department of Mathematics, Faculty of Sciences of Monastir, University of Monastir, Tunisia}\email{fathi.hassine@fsm.rnu.tn}

\author{Luc ROBBIANO}
\address{Laboratoire de Math\'ematiques, Universit\'e de Versailles Saint-Quentin en Yvelines, 78035 Versailles, France}
\email{luc.robbiano@uvsq.fr}

\begin{abstract}
This work is dedicated to the study of a linear model arising in thermoelastic rod of homogeneous material. The system is resulting from a coupling of a heat and a wave equation in the interval $(0,1)$ with Dirichlet boundary conditions at the outer endpoints where the parabolic component is degenerating at the end point $x=0$. Two models are considered the first is with weak degeneracy and the second is with strong degeneracy. We aim to study the well-posedness and asymptotic stability of both systems using techniques from the $C_{0}$-semigroup theory and a use a
frequency domain approach based on the well-known result of Pr\"uss in order to prove using some multiplier techniques that the energy of classical solutions decays uniformly as time goes to infinity.
\end{abstract}    

\subjclass[2010]{35B35, 35B40, 93D20}
\keywords{Stability, thermoelastic system, degenerated equation}

\maketitle

\tableofcontents
 
\section{Introduction}
The equations of thermoelasticity describe the elastic and the thermal behavior of elastic, heat conductive media, in particular the reciprocal actions between elastic stresses and temperature differences. The equations are a coupling of the equations of elasticity and of the heat equation and thus build a hyperbolic-parabolic system. Indeed, both hyperbolic and parabolic effects are encountered. Thermoelasticity is the elasticity of bodies resulting from an increase in temperature. Thermoelasticity is the appropriate model for the explanation of the decay of the amplitude of free vibrations of some elastic bodies. Since then a wide variety of results and applications have been obtained in many different fields. For instance, in \cite{CVC1,CVC2,C,SHTBSA} synthetic tissues which mimic human bones are investigated, while in \cite{RFG} cardiological tissues are considered. Besides a model of apples regarded as thermoelastic bodies is studied in \cite{MKPK,SA}. Also, even the relevance of studying viscoelasticity problems in connection with earthquakes is highlighted in \cite{Bor}.

\medskip

Several mathematical models that come from physics lead to the study of partial differential equations and sometimes evolution equations allowing mathematicians to describe the behavior of a quantity that depends on several variables (states or entries). These kind of equations are called coupled systems. The problem concerns the stabilization of a coupled system has been intensively studied the last three decays. In the case of coupled wave-wave systems subjected to only one internal feedback operator (this is the indirect internal stabilization situation), positive and negative exponential and polynomial stability results have been obtained in \cite{Ala-Can-Com,AHR,AH,ALS,hassine1,Kap} for local and non singular damping. Some of these results were extended to several cases (plate-plate, wave-plate, coupling) for locally distributed indirect stabilization in \cite{hassine2,hassine4,hassine3,Bey}. A large number of papers, see for instance \cite{Amm-Bad-Ben,Daf,Las,Leb-Zua,Liu-Liu,Muniz-Racke,Zha-Zua} just to cite a few, concern the stabilization of hyperbolic-parabolic coupled systems, such as thermoelasticity, thermoplates, fluid-structure interaction etc. For such systems, the main goal is to determine whether the dissipation induced by the heat-type equation is sufficient for stabilizing the full system obtained by coupling it to a hyperbolic-type equation. 

A deformation of a thermoelastic body which varies in time leads to a change of the temperature distribution in the body, and conversely. The internal energy of the body depends on both the deformation and the temperature. A thermoelastic system describes the above coupled processes. In general, it consists of an elastic equation and a heat equation which are coupled in a fashion such that the transfer between the mechanical energy and the heat energy is taken into account \cite{Now}. In this paper we consider the linear system of thermoelasticity with Dirichlet boundary conditions which describe a linear thermoelastic rod of homogeneous material. If $u(x,t)$ and $\theta(x,t)$ for $x\in(0,1)$ and $t\geq0$  denote the unknown functions representing the transverse displacement of the string and the temperature difference to a fixed reference temperature, respectively, the differential equations for $u$ and $\theta$ are
\medskip
\begin{equation}\label{ITE1}
\left\{
\begin{array}{ll}
\ddot{u}(x,t)-u''(x,t)+\kappa\theta'(x,t)=0&(x,t)\in(0,1)\times(0,+\infty),
\\
\dot{\theta}(x,t)-\left(a(x)\theta'(x,t)\right)'+\kappa\dot{u}'(x,t)=0&(x,t)\in(0,1)\times(0,+\infty),
\\
u(0,t)=\theta(0,t)=0&t\in(0,+\infty),
\\
u(1,t)=\theta(1,t)=0&t\in(0,+\infty),
\\
\ds u(x,0)=u^{0}(x),\;\dot{u}(x,0)=u^{1}(x),\;\theta(x,0)=\theta^{0}(x)&x\in(0,1),
\end{array}
\right.
\end{equation}
where the point stands for the time derivative and the prime stands for the space derivative, $\kappa$ is a positive constant small enough and $a$ is the coefficient damping which assumed to be a non-negative function. Which can be regarded as the thermal effect from the heat equation $\dot{\theta}-(a\theta')'=0$ acting on the classical wave equation $\ddot{u}-u'' = 0$ by the lower order coupling term $(\kappa\theta',\kappa\dot{u}')$.

\medskip

The natural energy of system \eqref{ITE1} is given by
$$
E(t)=\frac{1}{2}\int_{0}^{1}\left(|\dot{u}(x,t)|^{2}+|u'(x,t)|^{2}+|\theta(x,t)|^{2}\right)\,\ud x
$$
and it is dissipated according to the following law
\begin{equation*}
\frac{\ud}{\ud t}E(t)=-\int_{0}^{1}a(x)\,|\theta'(x,t)|^{2}\,\ud x,\;\forall\,t>0.
\end{equation*}

In the last two decades, some qualitative properties (containing well-posedness, blow-up, propagation of singularity, decay property and asymptotic behavior) of solutions to some classical thermoelastic systems have caught a lot of attention. In \cite{Amm-Bad-Ben} the authors give sufficient conditions which allow the study of the exponential stability of an abstract systems closely related to the linear thermoelasticity systems by a decoupling and multiplier techniques. In \cite{Amm-Liu-Shel,Chain-Trig,Fer-Liu-Racke,Hao-Liu,Liu-Liu,Hao-Liu-Yong} an abstract framework for analysis of linear thermoelastic systems was considered where sufficient conditions for the exponential stability, analyticity and differentiability of the associated semigroups for the thermoelastic systems are obtained. Lebeau and Zuazua \cite{Leb-Zua} consider the two and three-dimensional system of linear thermoelasticity in a bounded smooth domain with Dirichlet boundary conditions. They analyze whether the energy of solutions decays exponentially uniformly to zero in particular they show that the decay rate is never uniform when the domain is convex and in two space dimensions they give a sufficient condition for the uniform decay to hold. Recently, Chen and Liu \cite{Chen-Liu} focus  on the generalized thermoelastic system of heat equation and wave equation by the lower order coupling term. They derive new large time asymptotic profiles with the regularity-loss structure and optimal decay estimates with suitable higher regularities for the Cauchy data. 

\medskip

All the previous works deal with the case where the parabolic component is non-degenerate. Stability properties of non-degenerate equations have been widely studied in recent years.  However, many problems that are relevant for applications are described by degenerate equations. For example, degenerate parabolic equations arise from many problems with physical background. For instance, the velocity field of a laminar flow on a flat plate can be described by the Prandtl equations \cite{Car-Mar-Van1,Car-Mar-Van2}. By using the so-called Crocco transformation, these equations are transformed into a nonlinear degenerate parabolic equation (the Crocco equation \cite{Ole-Sam}). In this paper we are interested in the degenerated case where the degeneracy occurs at the boundary. Mainly, we consider two kinds of degeneracy equations: a weak degeneracy (WDP) and a strong degeneracy (SDP) and we aim to study the well-posedness and stability of both systems. In particular, let's assume that
\begin{equation}\label{ITE2}
a\in\mathcal{C}^{0}([0,1])\cap\mathcal{C}^{1}((0,1]),\quad a>0 \text{ in } (0,1] \text{ and } a(0)=0,
\end{equation}
if in addition $a$ satisfies
\begin{equation}\label{ITE3}
\frac{1}{a}\in L^{1}(0,1),
\end{equation}
we consider the weakly degenerate problem (WDP):
\begin{equation}\label{ITE4}
\left\{
\begin{array}{ll}
\ddot{u}(x,t)-u''(x,t)+\kappa\theta'(x,t)=0&(x,t)\in(0,1)\times(0,+\infty),
\\
\dot{\theta}(x,t)-\left(a(x)\theta'(x,t)\right)'+\kappa\dot{u}'(x,t)=0&(x,t)\in(0,1)\times(0,+\infty),
\\
u(0,t)=\theta(0,t)=0&t\in(0,+\infty),
\\
u(1,t)=\theta(1,t)=0&t\in(0,+\infty),
\\
\ds u(x,0)=u^{0}(x),\;\dot{u}(x,0)=u^{1}(x),\;\theta(x,0)=\theta^{0}(x)&x\in(0,1),
\end{array}
\right.
\end{equation}
and if in addition to \eqref{ITE2}, $a$ satisfies
\begin{equation}\label{ITE5}
a\in\mathcal{C}^{1}([0,1]),\quad \frac{1}{a}\notin L^{1}(0,1) \text{ and }\frac{1}{\sqrt{a}}\in L^{1}(0,1),
\end{equation}
we consider the strong degenerate problem (SDP):
\begin{equation}\label{ITE6}
\left\{
\begin{array}{ll}
\ddot{u}(x,t)-u''(x,t)+\kappa\theta'(x,t)=0&(x,t)\in(0,1)\times(0,+\infty),
\\
\dot{\theta}(x,t)-\left(a(x)\theta'(x,t)\right)'+\kappa\dot{u}'(x,t)=0&(x,t)\in(0,1)\times(0,+\infty),
\\
u(0,t)=(a\theta')(0,t)=0&t\in(0,+\infty),
\\
u(1,t)=\theta(1,t)=0&t\in(0,+\infty),
\\
\ds u(x,0)=u^{0}(x),\;\dot{u}(x,0)=u^{1}(x),\;\theta(x,0)=\theta^{0}(x)&x\in(0,1).
\end{array}
\right.
\end{equation}

The controllability for degenerate parabolic equations has been extensively studied in recent years \cite{Gueye,Car-Mar-Van1,Car-Mar-Van2,Car-Fra-Roc,Car-Mar-Van3,Wang1,Wang2} and references therein (see also \cite{{Ala-Can-Leu}} for the hyperbolic equation). However, few attentions were paid to the corresponding stability problem \cite{Kin-Par,Wang1}. Wang \cite{Wang1} investigated the boundary behavior at $x=0$ and asymptotic behavior \cite{Gao-Li-Liu,Wang1} as $t$  goes to the infinity. In both cases of weak and strong degeneracy, uniform exponential stability was obtained. These works extended the result of the non-degeneracy case as it is well known that the heat equation without degeneracy is exponential stable. Things are quite different when it comes to coupled systems involving parabolic equation. In fact, a lack of uniform stability occurs in several situations. For instance, for the fluid-structure interaction model a sharp estimates for the rate of energy decay of a certain class of classical solutions was provided in one-dimension (see \cite{Bat-Pau-Sei}) and multi-dimension case (see \cite{Ava-Las-Tri,Ng-Sei,Duc}). Regarding the degenerate case for this same model it was shown that when the degeneracy is weak, the underlying semigroup is polynomially stable, and a sharp polynomial decay rates was provided (see \cite{Han-Wang-Wang,Teb1,Teb2}). It is also worthwhile to notify that, in \cite{Teb1,Teb2} the author consider another fluid–structure interaction model, where the structure is modeled by an Euler-Bernoulli beam and the fluid by a weakly degenerate parabolic equation and he proved that the underlying semigroup is both exponentially stable, and of Gevrey class. Concerning the thermoplate  equation with degeneracy occurring in the heat component, as far as we know, there is only one work who treated the problem of stability for a such model (see \cite{Ata-Kam}) where the authors analyze the evolution of the energy density of a family of solutions for the higher dimension case by means of a method based on the use of microlocal defect measures. In this paper and unlike the works of Tebou \cite{Teb1,Teb2}, we are focusing not only on the problem of weak degeneracy but also to the problem of strong degeneracy as well. We shall prove that both problems are well-posed by means of semigroup theory and we prove a uniform decay rate of the energy for both systems using frequency domain method.

\medskip

The reminder of the paper is organized as follow: Section \ref{WPTE} deals with the well-posedness of the problems \eqref{ITE4} and \eqref{ITE6}. Section \ref{PSTE} is devoted to the study of strong stability and uniform stability for both systems. %%%%%%%%%%%%%%%%%%%%%%%%%%%%%%%%%%%%%%%%%%%%%%%%%%%%%%%%%%%%%%%%%%%%%%%%%%%%%%%%%%%%%%%%%%%%%%%%%%%%%%%%%%%%%%%%%%%%%%%%%%%%%%%%%%%%%%%%%%%%%%%%%%%%%%%%%%%%%%%%%%%%%%%%SECTION%%%%%%%%%%%%%%%%%%%%%%%%%%%%%%%%%%%%%%%%%%%%%%%%%%%%%%%%%%%%%%%%%%%%%%%%%%%%%%%%%%%%%%%%%%%%%%%%%%%%%%%%%%%%%%%%%%%%%%%%%%%%%%%%%%%%%%%%%%%%%%%%%%%%%%%%%%%%%%%%%%%%%%%%%%%%%%%
\section{Well-posedness}\label{WPTE}
Let $\ds\mathcal{H}=H_{0}^{1}(0,1)\times L^{2}(0,1)\times L^{2}(0,1)$ be the Hilbert space endowed with the inner product define for $(u,v,\theta),\,(\tilde{u},\tilde{v},\tilde{\theta})\in\mathcal{H}$ by
$$ 
\left\langle (u,v,\theta),(\tilde{u},\tilde{v},\tilde{\theta})\right\rangle_{\mathcal{H}}=\int_{0}^{1}u'(x)\,.\,\overline{\tilde{u}'}(x)\,\ud x+\int_{0}^{1}v(x)\,.\,\overline{\tilde{v}}(x)\,\ud x+\int_{0}^{1}\theta(x)\,.\,\overline{\tilde{\theta}}(x)\,\ud x.
$$
By setting $U(t)=(u(t),\dot{u}(t),\theta(t))$ and $U^{0}=(u^{0},u^{1},\theta^{0})$ we can rewrite system \eqref{ITE1} as a first order differential equation as follows
\begin{equation}\label{WPTE1}
\dot{U}(t)=\mathcal{A}U(t),\qquad U(0)=U^{0}\in\mathcal{D}(\mathcal{A}),
\end{equation}
where
$$
\mathcal{A}(u,v,\theta)=(v,u''-\kappa\theta',(a\theta')'-\kappa v'),
$$
with
$$
\mathcal{D}(\mathcal{A})=\left\{(u,v,\theta)\in\mathcal{H}:\;(u,v)\in\left(H_{0}^{1}(0,1)\right)^{2},\;\theta\in H_{a}^{1}(0,1),\;u''-\kappa\theta'\in L^{2}(0,1),\;a\theta'\in H^{1}(0,1)\right\}. 
$$
where
$$
H_{a}^{1}(0,1)=\left\{\theta\in L^{2}(0,1):\;\theta \text{ is absolutely continuous in }[0,1],\; \sqrt{a}\,\theta'\in L^{2}(0,1),\;\theta(0)=\theta(1)=0\right\}
$$
in the case of (WDP) and
$$
H_{a}^{1}(0,1)=\left\{\theta\in L^{2}(0,1):\;\theta \text{ is locally absolutely continuous in }(0,1],\; \sqrt{a}\,\theta'\in L^{2}(0,1),\;\theta(1)=0\right\}
$$
in the case of (SDP).

\medskip

We state a preliminary result that will also be used for the well-posedness of the problem.
\begin{lem}\label{WPTE9}
Let $\theta$ be such that $(a\theta)\in H_{\ell}^{1}(0,1) := \left\{u \in H^1(0,1), \, u(0) =0 \right\},$ then
$$
|a(x)\theta(x)|\leq \sqrt{x}\|(a\theta)'\|_{L^{2}(0,1)}\qquad\forall\,x\in[0,1]
$$
and if $a\theta'\in H^{1}(0,1)$ such that $(a\theta')(0)=0$,
$$
|a(x)\theta'(x)|\leq \sqrt{x}\|(a\theta')'\|_{L^{2}(0,1)}\qquad\forall\,x\in[0,1].
$$
\end{lem}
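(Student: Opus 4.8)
The plan is to recognize that both inequalities are instances of a single elementary Poincar\'e-type estimate for $H^{1}$ functions that vanish at the origin, and to establish that estimate once by combining the fundamental theorem of calculus with the Cauchy--Schwarz inequality.

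First I would isolate the general claim: if $f\in H^{1}(0,1)$ with $f(0)=0$, then $|f(x)|\leq\sqrt{x}\,\|f'\|_{L^{2}(0,1)}$ for every $x\in[0,1]$. Since membership in $H^{1}(0,1)$ guarantees that $f$ is absolutely continuous on $[0,1]$, and since $f(0)=0$, the fundamental theorem of calculus yields $f(x)=\int_{0}^{x}f'(s)\,\ud s$. Applying Cauchy--Schwarz on the interval $(0,x)$ then gives
$$
|f(x)|\leq\int_{0}^{x}|f'(s)|\,\ud s\leq\sqrt{x}\left(\int_{0}^{x}|f'(s)|^{2}\,\ud s\right)^{1/2},
$$
and bounding the truncated integral by the full $L^{2}(0,1)$ norm of $f'$ completes the general claim.

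Next I would specialize. For the first inequality I take $f=a\theta$; the hypothesis $(a\theta)\in H_{\ell}^{1}(0,1)$ is exactly the statement that $a\theta\in H^{1}(0,1)$ and $(a\theta)(0)=0$, so the general claim applies and produces $|a(x)\theta(x)|\leq\sqrt{x}\,\|(a\theta)'\|_{L^{2}(0,1)}$. For the second inequality I take $f=a\theta'$, using the hypotheses $a\theta'\in H^{1}(0,1)$ and $(a\theta')(0)=0$; the same argument delivers $|a(x)\theta'(x)|\leq\sqrt{x}\,\|(a\theta')'\|_{L^{2}(0,1)}$.

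There is no genuine obstacle in this argument: the only point requiring care is the appeal to the fundamental theorem of calculus, which is licensed precisely by the $H^{1}$ regularity (hence absolute continuity) together with the vanishing of the function at $x=0$. Rather than repeating the computation for each part, I would state the single estimate and observe that both claims follow by the two choices $f=a\theta$ and $f=a\theta'$.
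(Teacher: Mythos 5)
Your proposal is correct and follows the same route as the paper: write the function as the integral of its derivative from $0$ to $x$ (using $H^{1}$ regularity and the vanishing at the origin) and apply Cauchy--Schwarz. The only cosmetic difference is that you factor the argument through a single general estimate and then specialize to $f=a\theta$ and $f=a\theta'$, whereas the paper simply repeats the one-line computation twice.
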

\begin{proof}
Since $(a\theta)\in H_{\ell}^{1}(0,1)$ and, then by Cauchy–Schwarz inequality
$$
|(a\theta)(x)|=\left|\int_{0}^{x}(a\theta)'(s)\,\ud s\right|\leq \sqrt{x}\|(a\theta)'\|_{L^{2}(0,1)}.
$$
Now, if $a\theta'\in H^{1}(0,1)$ and $(a\theta')(0)=0$ then similar arguments yield to
$$
|(a\theta')(x)|=\left|\int_{0}^{x}(a\theta')'(s)\,\ud s\right|\leq \sqrt{x}\|(a\theta')'\|_{L^{2}(0,1)}.
$$
This completes the proof.
\end{proof}
\begin{lem}\label{WPTE10}
We suppose that $a\in\mathcal{C}^{1}([0,1])$ such that $a>0$ in $(0,1]$, $\ds\frac{1}{a}\notin L^{1}(0,1)$ and $\ds\frac{1}{\sqrt{a}}\in L^{1}(0,1)$. Let $\theta\in L^{2}(0,1)$ be a locally absolutely continuous in $(0,1]$ such that $(a\theta')\in H^{1}(0,1)$ then we have the equivalence between the following two statements
\begin{itemize}
	\item[i)] $\theta\in H_{a}^{1}(0,1)$.
	\item[ii)] $(a\theta)\in H_{0}^{1}(0,1)$ and $(a\theta')(0)=0$.
\end{itemize}
Besides, if one of the statements holds true then
$$
\int_{0}^{1}(a\theta')'\theta\,\ud s=-\int_{0}^{1}a|\theta'|^{2}\,\ud s.
$$
\end{lem}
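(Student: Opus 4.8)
The plan is to establish the equivalence by proving the two implications separately, and to obtain the integration by parts formula as a by-product of the reverse implication. Throughout I write $p := a\theta'$, which by hypothesis lies in $H^{1}(0,1)$ and is therefore continuous on $[0,1]$, so that the value $p(0)=(a\theta')(0)$ is meaningful; I keep in mind the standing assumptions $\theta\in L^{2}(0,1)$, $\theta$ locally absolutely continuous on $(0,1]$, and the strong degeneracy $\frac1a\notin L^{1}(0,1)$.

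First I would prove (i)$\Rightarrow$(ii). Since $a\in\mathcal{C}^{1}([0,1])$, both $a'$ and $\sqrt a$ are bounded, so from $\sqrt a\,\theta'\in L^{2}(0,1)$ and $\theta\in L^{2}(0,1)$ one gets $(a\theta)'=a'\theta+a\theta'\in L^{2}(0,1)$, whence $a\theta\in H^{1}(0,1)$; moreover $(a\theta)(1)=a(1)\theta(1)=0$. It remains to locate the two boundary values at $x=0$, and here the non-integrability of $1/a$ is decisive. If $(a\theta)(0)=L\neq0$, then $|a\theta|\ge|L|/2$ near $0$, so $|\theta(x)|^{2}\ge\frac{L^{2}}{4\,a(x)^{2}}\ge\frac{L^{2}}{4\,a(x)}$ there (using $a<1$ near $0$), forcing $\int_{0}|\theta|^{2}=+\infty$ and contradicting $\theta\in L^{2}$; hence $(a\theta)(0)=0$ and $a\theta\in H_{0}^{1}(0,1)$. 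Likewise, if $p(0)=(a\theta')(0)=M\neq0$, then $a|\theta'|^{2}=\frac{|p|^{2}}{a}\ge\frac{M^{2}}{4\,a}$ near $0$, contradicting $\sqrt a\,\theta'\in L^{2}$; hence $(a\theta')(0)=0$.

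For (ii)$\Rightarrow$(i) together with the identity I would argue on truncated intervals. From $a\theta\in H_{0}^{1}$ and $a(1)>0$ we get $\theta(1)=0$. Fix $\varepsilon\in(0,1)$ and integrate by parts on $[\varepsilon,1]$: $\int_{\varepsilon}^{1}(a\theta')'\,\theta\,\ud s=-(a\theta')(\varepsilon)\theta(\varepsilon)-\int_{\varepsilon}^{1}a|\theta'|^{2}\,\ud s$, the boundary contribution at $x=1$ vanishing because $\theta(1)=0$. The heart of the matter is the interior boundary term $(a\theta')(\varepsilon)\theta(\varepsilon)=p(\varepsilon)\theta(\varepsilon)$, which I must show tends to $0$ even though $\theta$ need not be bounded near $0$. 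Since $p\in H^{1}$ with $p(0)=0$, the second estimate of Lemma \ref{WPTE9} gives $|p(\varepsilon)|\le\sqrt{\varepsilon}\,\|p'\|_{L^{2}(0,1)}$; on the other hand $\theta\in L^{2}(0,1)$ forces $\liminf_{\varepsilon\to0}\varepsilon\,|\theta(\varepsilon)|^{2}=0$, for otherwise $\varepsilon|\theta(\varepsilon)|^{2}\ge c>0$ for small $\varepsilon$ would give $|\theta(\varepsilon)|^{2}\ge c/\varepsilon$, contradicting integrability. Choosing $\varepsilon_{n}\to0$ with $\varepsilon_{n}|\theta(\varepsilon_{n})|^{2}\to0$, we obtain $|p(\varepsilon_{n})\theta(\varepsilon_{n})|\le\|p'\|_{L^{2}(0,1)}\sqrt{\varepsilon_{n}|\theta(\varepsilon_{n})|^{2}}\to0$. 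Rearranging the identity and passing to the limit along $(\varepsilon_{n})$ — the integral $\int_{\varepsilon_{n}}^{1}(a\theta')'\theta$ converging since $(a\theta')'\in L^{2}$ and $\theta\in L^{2}$, while $\int_{\varepsilon_{n}}^{1}a|\theta'|^{2}$ increases to $\int_{0}^{1}a|\theta'|^{2}\in[0,+\infty]$ — yields $\int_{0}^{1}a|\theta'|^{2}\,\ud s=-\int_{0}^{1}(a\theta')'\,\theta\,\ud s<+\infty$. This simultaneously shows $\sqrt a\,\theta'\in L^{2}(0,1)$, i.e. $\theta\in H_{a}^{1}(0,1)$, and the announced identity.

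The only genuine obstacle is the control of the interior boundary term $p(\varepsilon)\theta(\varepsilon)$ as $\varepsilon\to0$: a pointwise bound on $\theta(\varepsilon)$ is unavailable since $\theta$ can blow up at the degeneracy, so the key idea is to avoid estimating $\theta$ pointwise and instead exploit the membership $\theta\in L^{2}$ through the sequence $(\varepsilon_{n})$ along which $\varepsilon_{n}|\theta(\varepsilon_{n})|^{2}\to0$. Combined with the square-root decay of $p$ furnished by Lemma \ref{WPTE9}, this kills the boundary term and delivers both the finiteness of $\int_{0}^{1}a|\theta'|^{2}$ and the integration by parts formula at once; the remaining steps (validity of the truncated integration by parts on $[\varepsilon,1]$, where $a$ is bounded away from $0$, and the monotone convergence of the truncated energy) are routine.
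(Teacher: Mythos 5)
Your proof is correct and takes essentially the same route as the paper: the direction (i)$\Rightarrow$(ii) is identical, and for (ii)$\Rightarrow$(i) both arguments integrate by parts on $[\varepsilon,1]$ and kill the boundary term $(a\theta')(\varepsilon)\theta(\varepsilon)$ by combining the $\sqrt{\varepsilon}$-decay of $a\theta'$ from Lemma \ref{WPTE9} with the fact that $\theta\in L^{2}(0,1)$ rules out a lower bound of order $\varepsilon^{-1/2}$ for $|\theta(\varepsilon)|$. The only cosmetic difference is that the paper first shows the boundary term admits a limit $\ell\in[-\infty,+\infty)$ and excludes $\ell\neq 0$ by contradiction, whereas you extract a sequence $\varepsilon_{n}\to 0$ with $\varepsilon_{n}|\theta(\varepsilon_{n})|^{2}\to 0$ along which the term vanishes directly; both variants are equally valid.
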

\begin{proof}
Let's first prove that ii) implies i). Assume that $\theta$ such that ii) holds true. To prove i) we only need to prove that $\sqrt{a}\theta'\in L^{2}(0,1)$.
\\
For $x\in(0,1)$, we write
$$
\int_{x}^{1}(a\theta')'\theta\,\ud s=(a\theta')(1)\theta(1)-a(x)\theta'(x)\theta(x)-\int_{x}^{1}a|\theta'|^{2}\,\ud s.
$$
Since $(a\theta')\in H^{1}(0,1)$ and $\theta(1)=0$, we have $(a\theta')(1)\theta(1)=0$. Hence,
$$
a(x)\theta'(x)\theta(x)=-\int_{x}^{1}(a\theta')'\theta\,\ud s-\int_{x}^{1}a|\theta'|^{2}\,\ud s.
$$
Since $(a\theta')\in H^{1}(0,1)$ and $\theta\in L^{2}(0,1)$ then $(a\theta')'\theta\in L^{1}(0,1)$. Thus, there exists $\ell\in[-\infty,+\infty)$ such that 
$$
(a\theta')(x)\theta(x)\underset{x\to 0}{\longrightarrow} \ell.
$$
Let's suppose that $\ell\neq0$. Then there exists $C>0$ such that, for $x$ small enough $|(a\theta')(x)\theta(x)|\geq C$. Using Lemma \ref{WPTE9}, there exists $C'>0$ such that for $x$ small enough $\ds|\theta(x)|\geq\frac{C'}{\sqrt{x}}$, which implies that $\theta\notin L^{2}(0,1)$ and contradict the fact that $\theta\in L^{2}(0,1)$. Hence, $\ell=0$ and we obtain that
$$
\int_{0}^{1}(a\theta')'\theta\,\ud s=-\int_{0}^{1}a|\theta'|^{2}\,\ud s.
$$
In particular we deduce that $\sqrt{a}\,\theta'\in L^{2}(0,1)$.

\medskip

Now let's prove that i) implies ii). Let's $\theta\in H_{a}^{1}(0,1)$ such that $(a\theta')\in H^{1}(0,1)$. To prove $\theta$ follows the statement of ii) we need to prove $(a\theta)\in H_{0}^{1}(0,1)$ and $(a\theta')(0)=0$. Since $(a\theta')\in H^{1}(0,1)$ there exists $\ell\in\R$ such that $(a\theta')\underset{x\to 0}{\longrightarrow}\ell$. If $\ell\neq 0$ then for $x$ small enough 
$$
a(x)|\theta'(x)|^{2}\geq \frac{\ell^{2}}{2a(x)}\notin L^{1}(0,1).
$$
Hence, $(\sqrt{a}\theta')\notin L^{2}(0,1)$. This implies $\ell=0$ and $(a\theta')(0)=0$. This implies $(a\theta)\in H^{1}(0,1)$ since $(a\theta)'=a'\theta+a\theta'\in L^{2}(0,1)$. Indeed, $a'\theta\in L^{2}(0,1)$, since $a\in \mathcal{C}^{1}([0,1])$ and we also have $(a\theta)(1)=0$, since $\theta(1)=0$ and $a(1)>0$.

\medskip

It remains to prove that $(a\theta)(0)=0$. Since $(a\theta)\in H^{1}(0,1)$, there exists $\ell\in\R$ such that $(a\theta)\underset{x\to 0}{\longrightarrow}\ell$. If $\ell\neq 0$, then $\theta\notin L^{2}(0,1)$. Indeed, it would implies for $x$ small enough
$$
|\theta(x)|^{2}\underset{0}{\sim}\frac{\ell^{2}}{a(x)^{2}}\geq\frac{C}{a(x)}\notin L^{1}(0,1)
$$
for some $C>0$. Thus we obtain $\ell=0$. This achieve the proof.
\end{proof}
\begin{rem}
Thanks to Lemma \ref{WPTE10} we can write $\mathcal{D}(\mathcal{A})$ in the case of (SDP) as follows:
\begin{align*}
\mathcal{D}(\mathcal{A})=\Big\{(u,v,\theta)\in\mathcal{H}:\; (u,v)\in\left(H_{0}^{1}(0,1)\right)^{2},\;\theta \text{ is locally absolutely continuous in }(0,1],
\\
u''-\kappa\theta'\in L^{2}(0,1),\;(a\,\theta)\in H_{0}^{1}(0,1),\;(a\theta')\in H^{1}(0,1),\; (a\theta')(0)=0\Big\}.
\end{align*}
\end{rem}
\begin{prop}\label{WPTE2}
The operator $\mathcal{A}$ is dissipative in $\mathcal{H}$ and we have
\begin{equation}\label{WPTE14}
\re\left\langle\mathcal{A}\left(\begin{array}{c}
u
\\
v
\\
\theta
\end{array}\right),\left(\begin{array}{c}
u
\\
v
\\
\theta
\end{array}\right)\right\rangle_{\mathcal{H}}=-\int_{0}^{1}a(s)|\theta'(s)|^{2}\,\ud s,\quad\forall\,\left(\begin{array}{c}
u
\\
v
\\
\theta
\end{array}\right)\in\mathcal{D}(\mathcal{A}).
\end{equation}
\end{prop}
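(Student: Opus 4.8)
The plan is to expand $\langle\mathcal{A}U,U\rangle_{\mathcal{H}}$ straight from the definition of the inner product, integrate by parts in each piece, and observe that everything is purely imaginary except for the single term generated by the diffusion $(a\theta')'$, which will account for the dissipation. For $U=(u,v,\theta)\in\mathcal{D}(\mathcal{A})$ I would write
\[
\langle\mathcal{A}U,U\rangle_{\mathcal{H}}=\int_0^1 v'\,\overline{u'}\,\ud x+\int_0^1(u''-\kappa\theta')\,\overline{v}\,\ud x+\int_0^1\big((a\theta')'-\kappa v'\big)\,\overline{\theta}\,\ud x.
\]
One subtlety must be respected in the second integral: in the weakly degenerate case $\theta'$ (hence $u''$) need not belong to $L^2(0,1)$, so I would not split $u''$ off by itself. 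Instead I keep the combination and use $u''-\kappa\theta'=(u'-\kappa\theta)'$ with $u'-\kappa\theta\in H^1(0,1)$; since $v(0)=v(1)=0$ an integration by parts gives $\int_0^1(u''-\kappa\theta')\overline{v}\,\ud x=-\int_0^1 u'\overline{v'}\,\ud x+\kappa\int_0^1\theta\,\overline{v'}\,\ud x$.

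Collecting terms, the two ``wave'' contributions $\int_0^1 v'\overline{u'}\,\ud x-\int_0^1 u'\overline{v'}\,\ud x$ are of the form $z-\overline{z}$ (with $z=\int_0^1 v'\overline{u'}\,\ud x$) and hence purely imaginary, and likewise the two coupling contributions $\kappa\int_0^1\theta\,\overline{v'}\,\ud x-\kappa\int_0^1 v'\overline{\theta}\,\ud x$ form a purely imaginary expression $\kappa(p-\overline{p})$ with $p=\int_0^1\theta\,\overline{v'}\,\ud x$. Taking real parts annihilates all of these, so that
\[
\re\langle\mathcal{A}U,U\rangle_{\mathcal{H}}=\re\int_0^1(a\theta')'\,\overline{\theta}\,\ud x,
\]
and the whole statement reduces to proving $\re\int_0^1(a\theta')'\overline{\theta}\,\ud x=-\int_0^1 a\,|\theta'|^2\,\ud x$.

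Formally this is one integration by parts, $\int_0^1(a\theta')'\overline{\theta}=[a\theta'\overline{\theta}]_0^1-\int_0^1 a|\theta'|^2$, so everything comes down to the vanishing of the boundary term $[a\theta'\overline{\theta}]_0^1$. At $x=1$ this is immediate, since $\theta(1)=0$ in both the (WDP) and (SDP) settings. The endpoint $x=0$, where the coefficient degenerates, is the genuine obstacle and is exactly what the preliminary lemmas are for. In the (WDP) case I would note that $\tfrac{1}{a}\in L^1(0,1)$ gives $\tfrac{1}{\sqrt{a}}\in L^2(0,1)$, whence $\theta'=\tfrac{1}{\sqrt{a}}\cdot\sqrt{a}\,\theta'\in L^1(0,1)$ by Cauchy--Schwarz; thus $\theta$ is absolutely continuous up to $0$ with $\theta(0)=0$, while $a\theta'\in H^1(0,1)\hookrightarrow\mathcal{C}([0,1])$ is bounded, so $(a\theta'\overline{\theta})(0)=0$ and the integration by parts is legitimate. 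In the (SDP) case $\theta(0)$ may fail to exist, so instead I would invoke Lemma \ref{WPTE10}, which under precisely the hypotheses defining $\mathcal{D}(\mathcal{A})$ in the strong case yields $\int_0^1(a\theta')'\theta\,\ud s=-\int_0^1 a|\theta'|^2\,\ud s$; its proof shows $(a\theta')(x)\theta(x)\to0$ as $x\to0$ (using $(a\theta')(0)=0$ together with Lemma \ref{WPTE9}), and applied to the real and imaginary parts of $\theta$ this furnishes the complex identity. In either case the boundary term vanishes and, since $a\ge0$, the right-hand side is real and nonpositive; this proves \eqref{WPTE14} and thereby the dissipativity of $\mathcal{A}$.
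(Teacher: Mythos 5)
Your argument is correct and follows essentially the same route as the paper: expand $\langle\mathcal{A}U,U\rangle_{\mathcal{H}}$, integrate by parts, observe that all terms except $\int_0^1(a\theta')'\overline{\theta}\,\ud x$ are purely imaginary, and reduce everything to the vanishing of the boundary term $a\theta'\overline{\theta}$ at the degenerate endpoint, which you justify in (WDP) by continuity of $a\theta'$ together with $\theta(0)=0$ and in (SDP) via the limit argument packaged in Lemma \ref{WPTE10}. The paper carries out the same computation in polarized form (with two distinct elements $U,\widetilde U$ of the domain, so as to reuse identity \eqref{WPTE24} for the adjoint), but the substance is identical.
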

\begin{proof}
Let $\ds U=\left(\begin{array}{c}
u
\\
v
\\
\theta
\end{array}\right),\,\widetilde{U}=\left(\begin{array}{c}
\tilde{u}
\\
\tilde{v}
\\
\tilde{\theta}
\end{array}\right)\in \mathcal{D}(\mathcal{A})$. For all $x_{1},\,x_{2}\in(0,1)$ we have
\begin{equation}\label{WPTE11}
\int_{x_{1}}^{x_{2}}(a\theta')'\tilde{\theta}\,\ud s=(a\theta')(x_{2})\tilde{\theta}(x_{2})-(a\theta')(x_{1})\tilde{\theta}(x_{1})-\int_{x_{1}}^{x_{2}}(a\theta')(s)\tilde{\theta}'(s)\,\ud s.
\end{equation}
We need first to prove that
\begin{equation}\label{WPTE13}
a(x)\theta'(x)\tilde{\theta}(x)\underset{x\to 0}{\longrightarrow}0\quad\text{ and }\quad a(x)\theta'(x)\tilde{\theta}(x)\underset{x\to 1}{\longrightarrow}0.
\end{equation}
The second statement is guarantee since $(a\theta')\in H^{1}(0,1)$ and $\tilde{\theta}(1)=0$ so it still only to prove the first one. To do so, we distinguish the two cases:

\medskip

\textbf{Case (WDP):} Since $(a\theta')\in H^{1}(0,1)$ and $\tilde{\theta}(0)=0$ and $\tilde{\theta}(0)=0$ we deduce that $(a\theta')(0)\tilde{\theta}(0)=0$.

\medskip

\textbf{Case (SDP):} From \eqref{WPTE11} and the analysis that follows, we have
\begin{equation}\label{WPTE12}
\int_{x}^{1}(a\theta')'\tilde{\theta}\,\ud s=-(a\theta')(x)\tilde{\theta}(x)-\int_{x}^{1}(a\theta')(s)\tilde{\theta}'(s)\,\ud s.
\end{equation}
Since $(a\theta')\in H^{1}(0,1)$ and $\tilde{\theta},\,\sqrt{a}\theta'$ and $\sqrt{a}\tilde{\theta}'$ belong to $\in L^{2}(0,1)$, we deduce that $(a\theta')'\tilde{\theta}$ and $a\theta'\tilde{\theta}'$ belong to $L^{2}(0,1)$. Consequently, from \eqref{WPTE12} there exists $\ell\in\R$ such that $(a\theta')(x)\tilde{\theta}(x)\underset{x\to 0}{\longrightarrow}\ell$.

\medskip

If $\ell\neq 0$, then it would imply that $\tilde{\theta}\notin L^{2}(0,1)$. Indeed, for some $c>0$ and for $x$ small enough that
$$
|\tilde{\theta}(x)|\geq\frac{\ell}{2|a(x)\theta'(x)|}\geq \frac{c}{\sqrt{x}}\notin L^{2}(0,1).
$$
This makes $\ell =0$ and therefore $(a\theta')(0)\tilde{\theta}(0)=0$ and \eqref{WPTE13} is now proven in both cases. 

\medskip

Now with this we can perform the following calculations
\begin{align}\label{WPTE24}
\left\langle\mathcal{A}U,\widetilde{U}\right\rangle_{\mathcal{H}}&=\int_{0}^{1}v'\overline{\tilde{u}'}\,\ud x+\int_{0}^{1}\left(u''-\kappa\theta'\right)\overline{\tilde{v}}\,\ud x+\int_{0}^{1}\left((a\theta')'-\kappa v'\right)\overline{\tilde{\theta}}\,\ud x\nonumber
\\
&=\int_{0}^{1}v'\overline{\tilde{u}'}\,\ud x-\int_{0}^{1}u'\overline{\tilde{v}'}\,\ud x+\kappa\int_{0}^{1}\theta\overline{\tilde{v}'}\,\ud x-\int_{0}^{1}a\theta'\overline{\tilde{\theta}'}\,\ud x-\kappa\int_{0}^{1}v'\overline{\tilde{\theta}}\,\ud x.
\end{align}
Thus \eqref{WPTE14} follows by taking the real part of $U=\widetilde{U}$ in the last estimate which prove that $\mathcal{A}$ is dissipative and this achieve the proof.
\end{proof}
\begin{prop}
The operator $\mathcal{A}$ is closed in $\mathcal{H}$.
\end{prop}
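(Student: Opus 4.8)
The plan is to verify closedness straight from the definition. I would take a sequence $(U_n)=(u_n,v_n,\theta_n)\subset\mathcal{D}(\mathcal{A})$ with $U_n\to U=(u,v,\theta)$ in $\mathcal{H}$ and $\mathcal{A}U_n\to F=(f,g,h)$ in $\mathcal{H}$, and show that $U\in\mathcal{D}(\mathcal{A})$ with $\mathcal{A}U=F$. Unwinding the definitions, the hypotheses read $u_n\to u$ in $H_0^1(0,1)$, $v_n\to v$ and $\theta_n\to\theta$ in $L^2(0,1)$, together with $v_n\to f$ in $H_0^1(0,1)$, $u_n''-\kappa\theta_n'\to g$ in $L^2(0,1)$, and $(a\theta_n')'-\kappa v_n'\to h$ in $L^2(0,1)$.

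First I would dispatch the first two components. Since $v_n\to v$ in $L^2$ and $v_n\to f$ in $H_0^1(0,1)\hookrightarrow L^2(0,1)$, uniqueness of limits gives $v=f\in H_0^1(0,1)$; in particular $v_n\to v$ in $H_0^1$, so $v_n'\to v'$ in $L^2$. For the second component, $u_n\to u$ in $H_0^1$ and $\theta_n\to\theta$ in $L^2$ force $u_n''-\kappa\theta_n'\to u''-\kappa\theta'$ in the sense of distributions; comparing with the $L^2$-limit $g$ yields $u''-\kappa\theta'=g\in L^2(0,1)$, which is the second condition in $\mathcal{D}(\mathcal{A})$.

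The heart of the matter is the third component, where the degeneracy of $a$ at $x=0$ precludes a direct elliptic estimate on $\theta'$. Here I would exploit the dissipation identity of Proposition \ref{WPTE2}. As $\mathcal{D}(\mathcal{A})$ is a subspace, $U_n-U_m\in\mathcal{D}(\mathcal{A})$, and \eqref{WPTE14} gives $\int_0^1 a|\theta_n'-\theta_m'|^2\,\ud x=-\re\langle\mathcal{A}(U_n-U_m),U_n-U_m\rangle_{\mathcal{H}}\leq\|\mathcal{A}U_n-\mathcal{A}U_m\|_{\mathcal{H}}\,\|U_n-U_m\|_{\mathcal{H}}$. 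Since $(U_n)$ and $(\mathcal{A}U_n)$ are Cauchy, the right-hand side tends to $0$, so $\sqrt{a}\,\theta_n'$ is Cauchy, hence convergent, in $L^2(0,1)$; comparing with the distributional limit $\sqrt{a}\,\theta_n'\to\sqrt{a}\,\theta'$ identifies the limit as $\sqrt{a}\,\theta'\in L^2(0,1)$. Because $a$ is bounded, $a\theta'=\sqrt{a}\,(\sqrt{a}\,\theta')\in L^2$, and since $v_n'\to v'$ in $L^2$ we also get $(a\theta_n')'=\big[(a\theta_n')'-\kappa v_n'\big]+\kappa v_n'\to h+\kappa v'$ in $L^2$, while distributionally $(a\theta_n')'\to(a\theta')'$; thus $(a\theta')'=h+\kappa v'\in L^2(0,1)$, whence $a\theta'\in H^1(0,1)$ and the third component of $\mathcal{A}U$ equals exactly $h$.

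It remains to check the boundary conditions defining $H_a^1(0,1)$, case by case. Near $x=1$, where $a$ is bounded below, $\theta_n\to\theta$ in $H^1(1/2,1)\hookrightarrow C[1/2,1]$, giving $\theta(1)=0$ in both cases. For (WDP), the hypothesis $1/\sqrt{a}\in L^2(0,1)$ lets me write $\theta_n'=(1/\sqrt{a})(\sqrt{a}\,\theta_n')$ and conclude $\theta_n'\to\theta'$ in $L^1(0,1)$; since $\theta_n(0)=0$ this yields uniform convergence of $\theta_n$ to the absolutely continuous representative of $\theta$, hence $\theta(0)=0$. For (SDP), I would use $a\theta_n'\to a\theta'$ in $H^1(0,1)\hookrightarrow C[0,1]$ to pass to the limit in $(a\theta_n')(0)=0$ and obtain $(a\theta')(0)=0$, together with local absolute continuity of $\theta$ on $(0,1]$; by Lemma \ref{WPTE10} this places $\theta$ in $H_a^1(0,1)$. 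In both cases $U\in\mathcal{D}(\mathcal{A})$ and $\mathcal{A}U=F$, proving closedness. The main obstacle is precisely the degenerate third component: the convergence of $(a\theta_n')'$ alone does not control $a\theta_n'$ up to the degenerate endpoint, and it is the dissipation identity \eqref{WPTE14} that supplies the missing $L^2$-compactness of $\sqrt{a}\,\theta_n'$.
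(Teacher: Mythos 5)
Your proof is correct, but it follows a genuinely different and noticeably shorter route than the paper's. The paper argues case by case: for (WDP) it uses $1/a\in L^1(0,1)$ and an integration-by-parts identity to show $\theta_n\to\theta$ uniformly on $[0,1]$, then tests against a cut-off $\varphi$ to control $a\theta_n'$, and only obtains $\sqrt{a}\,\theta'\in L^2(0,1)$ via Fatou's lemma from the a.e.\ convergence of $a|\theta_n'|^2$; for (SDP) it works with the primitive $w_n(x)=\int_0^x(a\theta_n')'$ and its uniform limit. Your key move --- applying the dissipation identity \eqref{WPTE14} to the differences $U_n-U_m\in\mathcal{D}(\mathcal{A})$ to conclude that $\sqrt{a}\,\theta_n'$ is Cauchy, hence \emph{strongly convergent}, in $L^2(0,1)$ --- replaces all of this with one line and treats both degeneracy regimes uniformly until the final boundary checks; it also yields norm convergence where the paper only gets a $\liminf$ bound. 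Two small points you should make explicit: the identification of the $L^2$-limit of $\sqrt{a}\,\theta_n'$ with $\sqrt{a}\,\theta'$ is best done by restricting to $[\varepsilon,1]$, where $a$ is bounded below and $C^1$, so that $\theta_n'\to\theta'$ in $L^2(\varepsilon,1)$ and the limits agree a.e.\ (multiplying a distribution by the merely continuous $\sqrt{a}$ is not licit on all of $(0,1)$); and in the (SDP) case the appeal to Lemma \ref{WPTE10} is superfluous, since you already hold $\sqrt{a}\,\theta'\in L^2(0,1)$, the local absolute continuity, and $\theta(1)=0$ directly, which is all that membership in $H_a^1(0,1)$ requires. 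Neither point is a gap.
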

\begin{proof}
Consider a sequence $(u_{n},v_{n},\theta_{n})$ that belongs to $\mathcal{D}(\mathcal{A})$ such that
\begin{equation}\label{WPTE15}
\left(\begin{array}{c}
u_{n}
\\
v_{n}
\\
\theta_{n}
\end{array}\right)\underset{n\to+\infty}{\longrightarrow}\left(\begin{array}{c}
u
\\
v
\\
\theta
\end{array}\right)\text{ and }\mathcal{A}\left(\begin{array}{c}
u_{n}
\\
v_{n}
\\
\theta_{n}
\end{array}\right)=\left(\begin{array}{c}
v_{n}
\\
u_{n}''-\kappa\theta_{n}'
\\
(a\theta_{n}')'-\kappa v_{n}'
\end{array}\right)\underset{n\to+\infty}{\longrightarrow}\left(\begin{array}{c}
f
\\
g
\\
h
\end{array}\right)\in \mathcal{H}.
\end{equation}
To prove that $\mathcal{A}$ is closed we need to prove that $(u,v,\theta)\in\mathcal{D}(\mathcal{A})$ and $(v,u''-\kappa\theta',(a\theta')'-\kappa v')=(f,g,h)$. First let notes that $v_{n}\underset{n\to+\infty}{\longrightarrow}v$ in $L^{2}(0,1)$ and $v_{n}\underset{n\to+\infty}{\longrightarrow}f$ in $H_{0}^{1}(0,1)$ then $v\in H_{0}^{1}(0,1)$ and $v=f$. This implies that
$$
u_{n}''-\kappa\theta_{n}'\underset{n\to+\infty}{\longrightarrow}g\text{ in } L^{2}(0,1)\quad\text{ and }\quad (a\theta_{n}')'\underset{n\to+\infty}{\longrightarrow}\kappa f'+h\text{ in } L^{2}(0,1).
$$
We distinguish two cases:

\medskip

\textbf{Case (WDP):} For all $x\in[0,1]$ and $n,\,p\in\N$, we write using $\theta_{n}(0)=\theta_{p}(0)=0$,
\begin{equation}\label{WPTE16}
\theta_{n}(x)-\theta_{p}(x)=\int_{0}^{x}\theta_{n}'(s)-\theta_{p}'(s)\,\ud s.
\end{equation}
Since $\ds\frac{1}{a}\in L^{1}(0,1)$, we deduce for all $x\in (0,1)$,
\begin{align*}
|\theta_{n}(x)-\theta_{p}(x)|&\leq\left(\int_{0}^{x}\frac{\ud s}{a}\right)^{\frac{1}{2}}\left(\int_{0}^{x}a|\theta_{n}'|^{2}+a|\theta_{p}'|^{2}-2a \, Re \,[\theta_{n}'\theta_{p}']\,\ud s\right)^{\frac{1}{2}}
\\
&\leq\left(\int_{0}^{1}\frac{\ud s}{a}\right)^{\frac{1}{2}}\left(\int_{0}^{1}(a\theta_{n}')'\overline{\theta}_{n}+(a\theta_{p}')'\overline{\theta}_{p}-2 \, Re \, [\theta_{n}(a\theta_{p}')']\,\ud s\right)^{\frac{1}{2}},
\end{align*}
where we have used \eqref{WPTE16}. As $p$ goes to the infinity
\begin{align*}
|\theta_{n}(x)-\theta(x)|&\leq\left(\int_{0}^{1}\frac{\ud s}{a}\right)^{\frac{1}{2}}\left(\int_{0}^{1}(a\theta_{n}')'\overline{\theta}_{n}+(\kappa f'+h)\overline{\theta}-2 \, Re \, [\theta_{n}(\kappa f'+h)]\,\ud s\right)^{\frac{1}{2}}\quad\forall\,x\in[0,1].
\end{align*}
Hence,
$$
\sup_{x\in[0,1]}|\theta_{n}(x)-\theta(x)|\leq\left\|\frac{1}{a}\right\|_{L^{1}(0,1)}^{\frac{1}{2}}\left(\int_{0}^{1}(a\theta_{n}')'\overline{\theta}_{n}+(\kappa f'+h)\overline{\theta}-2 \, Re \, [\theta_{n}(\kappa f'+h)]\,\ud s\right)^{\frac{1}{2}}\underset{n\to+\infty}{\longrightarrow}0.
$$
We conclude that $\theta_{n}\underset{n\to+\infty}{\longrightarrow}\theta$ uniformly on $[0,1]$. In particular, using $\theta_{n}(0)=\theta_n(1)=0$ for all $n\in\N$ it implies that $\theta(0)=\theta(1)=0$.

Let $\varphi\in\mathcal{C}^{\infty}([0,1])$ be positive on $(0,1]$ such that $\varphi(0)=0$. Notice that $(a\theta_{n}')(0)\varphi(0)=0$ and $(a\theta_{n})(0)\varphi'(0)=0$ we deduce, for all $x\in[0,1]$,
$$
\int_{0}^{x}(a\theta_{n}')'\varphi\,\ud s=\int_{0}^{x}\theta_{n}(a\varphi')'\,\ud s-(a\theta_{n})(x)\varphi'(x)+(a\theta_{n}')(x)\varphi(x).
$$
Hence,
$$
(a\theta_{n}')(x)\varphi(x)=\int_{0}^{x}(a\theta_{n}')'\varphi\,\ud s-\int_{0}^{x}\theta_{n}(a\varphi')'\,\ud s+(a\theta_{n})(x)\varphi'(x).
$$
Since $(a\theta_{n}')'\underset{n\to+\infty}{\longrightarrow}h+\kappa f'$ in $L^{2}(0,1)$ and $\theta_{n}\underset{n\to+\infty}{\longrightarrow}\theta$ uniformly on $[0,1]$, it implies that $(a\theta_{n}')\varphi$ converge uniformly on $[0,1]$. In particular, $\theta'\in H^{1}(\epsilon,1)$ and $\theta_{n}'$ converge uniformly to $\theta'$ on $[\varepsilon,1]$ for all $\varepsilon>0$.

\medskip

Now we write, for all $x\in[0,1]$ 
\begin{equation}\label{WPTE17}
\int_{0}^{x}(a\theta_{n}')'\,\ud s=(a\theta_{n}')(x)-(a\theta_{n}')(0).
\end{equation}
Since $(a\theta_{n}')'\underset{n\to+\infty}{\longrightarrow}h+\kappa f'$ in $L^{2}(0,1)$ and $\theta_{n}'(x)\underset{n\to+\infty}{\longrightarrow}\theta'(x)$ for all $x>0$, we deduce that $(a\theta_{n}')(0)$ has a limit as $n$ goes to $\infty$. We denote by $\ell$ this limit. Passing to the limit in \eqref{WPTE17} yields to
\begin{equation}\label{WPTE19}
(a\theta')(x)=\int_{0}^{x}(a\theta')'\,\ud s-\ell,\quad\forall\, x\in(0,1].
\end{equation}
This implies $(a\theta')\in H^{1}(0,1)$.

\medskip

Now let us prove that $\sqrt{a}\theta'\in L^{2}(0,1)$. From \eqref{WPTE12}, it turns out that
$$
\int_{0}^{1}(a\theta_{n}')'\overline{\theta_{n}}\,\ud s=-\int_{0}^{1}a|\theta_{n}'|^{2}\,\ud s,\quad\forall\,n\in\N.
$$
Hence,
$$
\int_{0}^{1}a|\theta_{n}'|^{2}\,\ud s\underset{n\to+\infty}{\longrightarrow}-\int_{0}^{1}(h+\kappa f')\theta\,\ud s.
$$
In the other hand, the sequence $(a|\theta_{n}'|^{2})$ converge almost everywhere in $(0,1)$ to $a|\theta|^{2}$. Thus, Fatou's lemma implies
$$
\int_{0}^{1}a|\theta'|^{2}\,\ud s\leq \liminf_{n\to+\infty}\int_{0}^{1}a|\theta_{n}'|^{2}\,\ud s<+\infty
$$
and consequently, we obtain that $\sqrt{a}\theta'\in L^{2}(0,1)$.

\medskip

To prove that $(u,v,\theta)\in\mathcal{D}(\mathcal{A})$, it remains to prove that $\theta$ is absolutely continuous on $[0,1]$, $u,\,v\in H_{0}^{1}(0,1)$ and $(u''-\kappa\theta')\in L^{2}(0,1)$. First $\ds\theta'=\frac{1}{a}(a\theta')\in L^{2}(0,1)$ since $\ds\frac{1}{a}\in L^{1}(0,1)$ and $(a\theta')\in H^{1}(0,1)\subset L^{\infty}(0,1)$. Next we have to verify that
$$
\forall\,x_{1},\,x_{2}\in[0,1],\quad \theta(x_{2})-\theta(x_{1})=\int_{x_{1}}^{x_{2}}\theta'(s)\,\ud s.
$$
Notice that, $\theta(0)=0$, it is sufficient to prove
\begin{equation}\label{WPTE18}
\forall\,x\in[0,1],\quad \theta(x)=\int_{0}^{x}\theta'(s)\,\ud s.
\end{equation}
Let $x$ be in $(0,1]$ and we write
$$
\theta_{n}(x)=\int_{0}^{x}\theta_{n}(s)\,\ud s=\int_{0}^{\varepsilon}\theta_{n}'(s)\,\ud s+\int_{\varepsilon}^{x}\theta_{n}'(s)\,\ud s\quad\forall\,n\in\N.
$$
Hence,
\begin{align*}
\left|\theta_{n}(x)-\int_{\varepsilon}^{x}\theta_{n}'(s)\,\ud s\right|&=\left|\int_{0}^{\varepsilon}\theta_{n}'(s)\,\ud s\right|
\\
&\leq\left(\int_{0}^{\varepsilon}\frac{\ud s}{a(s)}\right)^{\frac{1}{2}}\left(\int_{0}^{1}a(s)|\theta_{n}'(s)|^{2}\,\ud s\right)^{\frac{1}{2}}.
\end{align*}
Since $\theta_{n}\underset{n\to+\infty}{\longrightarrow}\theta$ in $\mathcal{C}^{1}([\varepsilon,1])$ and in $L^{2}(0,1)$ for all $\varepsilon>0$, we can pass the limit to $+\infty$ in the above relation and obtain
$$
\left|\theta(x)-\int_{\varepsilon}^{x}\theta'(s)\,\ud s\right|\leq C\left(\int_{0}^{\varepsilon}\frac{\ud s}{a(s)}\right)^{\frac{1}{2}}\left(\int_{0}^{1}a(s)|\theta'(s)|^{2}\,\ud s\right)^{\frac{1}{2}}\underset{\varepsilon\to 0}{\longrightarrow}0,
$$
then \eqref{WPTE18} follows, where we used the fact that $\ds\frac{1}{a}\in L^{1}(0,1)$ and $\theta'\in L^{2}(0,1)$.

\medskip

It's clear that $u,\,v\in H_{0}^{1}(0,1)$ since $u_{n}\underset{n\to+\infty}{\longrightarrow} u$  and $v_{n}\underset{n\to+\infty}{\longrightarrow}f$ in $H_{0}^{1}(0,1)$ and $v_{n}\underset{n\to+\infty}{\longrightarrow}v$ in $L^{2}(0,1)$. Since $u_{n}''-\kappa\theta_{n}'\underset{n\to+\infty}{\longrightarrow}g$ in $L^{2}(0,1)$ and $\theta_{n}\underset{n\to+\infty}{\longrightarrow}\kappa\theta$ in $\mathcal{C}^{1}([\varepsilon,1])$ for all $\varepsilon>0$, then $u_{n}''\underset{n\to+\infty}{\longrightarrow}g-\kappa\theta'$ in $L^{2}(\varepsilon,1)$. Besides, we have $u_{n}\underset{n\to+\infty}{\longrightarrow}u$ in $H_{0}^{1}(0,1)$ then $u_{n}\underset{n\to+\infty}{\longrightarrow}u$ in $H^{2}(\varepsilon,1)$ and we have $u''=g-\kappa\theta'$ almost everywhere in $(0,1)$ which gives $u''-\kappa\theta'=g\in L^{2}(0,1)$. From \eqref{WPTE19} we have $(a\theta')'=h+\kappa f'$. With this we have proved that $(u,v,\theta)\in\mathcal{D}(\mathcal{A})$ and $\mathcal{A}(u,v,\theta)=(f,g,h)$ and achieve the first part of the proof.

\medskip

\textbf{Case (SDP):} We define
\begin{equation}\label{WPTE20}
w_{n}(x)=\int_{0}^{x}(a\theta_{n}')'(s)\,\ud s=(a\theta_{n}')(x)\quad\text{ and }\quad w(x)=\int_{0}^{x}(h+\kappa f')(s)\,\ud s.
\end{equation}
Since $(a\theta_{n}')'\underset{n\to+\infty}{\longrightarrow}h+\kappa f'$ in $L^{2}(0,1)$, we deduce that $w_{n}=a\theta_{n}'\underset{n\to+\infty}{\longrightarrow} w$ uniformly on $[0,1]$. Thus, for all $\varepsilon>0$,
$$
\theta_{n}'=\frac{w_{n}}{a}\underset{n\to+\infty}{\longrightarrow}\frac{w}{a}\quad\text{ uniformly on }[\varepsilon,1].
$$
In one hand $\theta_{n}\underset{n\to+\infty}{\longrightarrow}\theta$ in $L^{2}(0,1)$ and in another hand since $\theta_{n}(1)=0$ for all $n\in\N$ and then by writing
\begin{equation}\label{WPTE22}
\theta_{n}(x)=-\int_{x}^{1}\theta_{n}'(s)\,\ud s
\end{equation}
we deduce that $\ds\theta_{n}\underset{n\to+\infty}{\longrightarrow}-\int_{x}^{1}\frac{w(s)}{a(s)}\,\ud s$ uniformly on $[\varepsilon,1]$. Hence, $\ds\theta_{n}\underset{n\to+\infty}{\longrightarrow}\theta$ in $\mathcal{C}^{1}([\varepsilon,1])$. Passing to the limit in $w_{n}(x)=a(x)\theta_{n}'(x)$, we obtain $a(x)\theta'(x)=w(x)$ for every $x\in(0,1]$. This implies that $a\theta'\in H^{1}(0,1)$ and $a\theta\in H^{1}(0,1)$ since $(a\theta)'=a'\theta+a\theta'\in L^{2}(0,1)$ with the fact that $a'\theta\in L^{2}(0,1)$ and where we recall that $a\in\mathcal{C}^{1}([0,1])$. Moreover, since $(a\theta_{n})(1)=0$ we also have $(a\theta)(1)=0$. From Lemma \ref{WPTE9} and by passing to the limit in \eqref{WPTE20}, we deduce that for all $x\in(0,1]$,
\begin{equation*}
|a(x)\theta(x)|\leq \sqrt{x}\,\|w+a'\theta\|_{L^{2}(0,1)}\quad\text{ and }\quad|a(x)\theta'(x)|\leq \sqrt{x}\,\|h+\kappa f'\|_{L^{2}(0,1)}.
\end{equation*}
Hence, $(a\theta)(0)=0$ and $(a\theta')(0)=0$.

\medskip

Now we prove that $\theta$ is locally absolutely continuous on $(0,1]$. First, since $\ds\frac{1}{a}\in L_{\mathrm{loc}}^{1}((0,1])$ and $a\theta'\in H^{1}(0,1)\subset L^{\infty}(0,1)$ we deduce that $\ds\theta'=\frac{1}{a}(a\theta')\in L^{1}_{\mathrm{loc}}((0,1])$. Next we need to verify that 
$$
\theta(x_{2})-\theta(x_{1})=\int_{x_{1}}^{x_{2}}\theta'(s)\,\ud s\quad\forall\,x_{1},\,x_{2}\in(0,1].
$$
Notice that $\theta(1)=0$ then it is sufficient to prove that
\begin{equation}\label{WPTE21}
\theta(x)=-\int_{x}^{1}\theta'(s)\,\ud s\quad\forall\,x\in(0,1].
\end{equation}
Since $\theta_{n}\underset{n\to+\infty}{\longrightarrow}\theta$ in $\mathcal{C}^{1}([\varepsilon,1])$ for all $\varepsilon>0$ then we can passe to the limit in \eqref{WPTE22} and obtain \eqref{WPTE21}. Moreover, we have 
$$
(a\theta')(x)=w(x)=\int_{0}^{x}(h+\kappa f')(s)\,\ud s\quad\forall\,x\in[0,1]
$$
which implies that $(a\theta')'=h+\kappa f'$. To finish the proof we proceed as the last part of the first case in order to prove that ${}^{t}(u,v,\theta)\in\mathcal{D}(\mathcal{A})$ and $\mathcal{A}\,{}^{t}(u,v,\theta)={}^{t}(f,g,h)$.
\end{proof}
\begin{prop}\label{WPTE3}
The adjoint operator of $\mathcal{A}$ is given by
\begin{equation}\label{WPTE23}
\mathcal{D}(\mathcal{A}^{*})=\mathcal{D}(\mathcal{A}),\quad \mathcal{A}^{*}\left(\begin{array}{c}
u
\\
v
\\
\theta
\end{array}\right)=\left(\begin{array}{c}
-v
\\
-u''+\kappa\theta'
\\
(a\theta')'+\kappa v'
\end{array}\right)\quad\forall\,\left(\begin{array}{c}
u
\\
v
\\
\theta
\end{array}\right)\in\mathcal{D}(\mathcal{A}).
\end{equation}
\end{prop}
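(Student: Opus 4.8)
The plan is to verify the two inclusions $\mathcal{B}\subseteq\mathcal{A}^{*}$ and $\mathcal{A}^{*}\subseteq\mathcal{B}$ separately, where $\mathcal{B}$ is the operator with domain $\mathcal{D}(\mathcal{A})$ acting by $\mathcal{B}(u,v,\theta)=(-v,-u''+\kappa\theta',(a\theta')'+\kappa v')$. For the first inclusion I would take $U,\widetilde{U}\in\mathcal{D}(\mathcal{A})$ and start from the already established identity \eqref{WPTE24}, then integrate by parts so as to move all derivatives off $U$ and onto $\widetilde{U}$, thereby rewriting $\langle\mathcal{A}U,\widetilde{U}\rangle_{\mathcal{H}}$ as $\langle U,\mathcal{B}\widetilde{U}\rangle_{\mathcal{H}}$. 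Matching the five terms of \eqref{WPTE24} term by term, the only boundary contributions that arise are $[v\,\overline{\tilde u'}]_{0}^{1}$ and $[v\,\overline{\tilde\theta}]_{0}^{1}$, which vanish because $v\in H_{0}^{1}(0,1)$, together with $[\theta\,\overline{a\tilde\theta'}]_{0}^{1}$, which vanishes at $x=1$ since $\theta(1)=0$ and at $x=0$ by the limit \eqref{WPTE13} applied with the roles of $\theta$ and $\tilde\theta$ interchanged (the argument there is symmetric, both components being those of elements of $\mathcal{D}(\mathcal{A})$). This yields $\mathcal{D}(\mathcal{A})\subseteq\mathcal{D}(\mathcal{A}^{*})$ and $\mathcal{A}^{*}=\mathcal{B}$ on $\mathcal{D}(\mathcal{A})$. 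I note in passing that $\mathcal{B}=T\mathcal{A}T$ for the unitary involution $T(u,v,\theta)=(u,-v,\theta)$, which explains structurally why $\mathcal{A}^{*}$ carries the same dissipation $-\int_{0}^{1}a|\theta'|^{2}\,\ud s$ as in Proposition \ref{WPTE2}.

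For the reverse inclusion I would take $(u,v,\theta)\in\mathcal{D}(\mathcal{A}^{*})$ with $\mathcal{A}^{*}(u,v,\theta)=(f,g,h)$ and exploit the defining identity $\langle\mathcal{A}(\tilde u,\tilde v,\tilde\theta),(u,v,\theta)\rangle_{\mathcal{H}}=\langle(\tilde u,\tilde v,\tilde\theta),(f,g,h)\rangle_{\mathcal{H}}$ for all $(\tilde u,\tilde v,\tilde\theta)\in\mathcal{D}(\mathcal{A})$. Testing successively against triples with a single compactly supported smooth component recovers the distributional equations and the missing regularity: the $\tilde v$-test gives $f=-v$ together with $v\in H_{0}^{1}(0,1)$, the $\tilde u$-test gives $-u''+\kappa\theta'=g$ so that $u''-\kappa\theta'\in L^{2}(0,1)$, and the $\tilde\theta$-test gives $(a\theta')'+\kappa v'=h$ so that $a\theta'\in H^{1}(0,1)$.

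The main obstacle is the last step: upgrading from compactly supported tests to test triples whose $\theta$-component need not vanish near $x=0$, so as to force the correct degenerate boundary condition. This is exactly where the degeneracy $a(0)=0$ enters, and I would split into the two regimes as in the closedness proof. In the \textbf{(WDP)} case, using $\tfrac{1}{a}\in L^{1}(0,1)$ and the boundary term $[\tilde\theta\,\overline{a\theta'}]_{0}$ one recovers $\theta(0)=0$; in the \textbf{(SDP)} case, using $\tfrac{1}{\sqrt a}\in L^{1}(0,1)$ and the bounds of Lemma \ref{WPTE9} on $a\theta$ and $a\theta'$ near $0$, together with the equivalence of Lemma \ref{WPTE10}, one recovers $(a\theta')(0)=0$ and $a\theta\in H_{0}^{1}(0,1)$. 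Controlling these boundary contributions at the degenerate endpoint is the delicate point, but once done we obtain $(u,v,\theta)\in\mathcal{D}(\mathcal{A})$, hence $\mathcal{A}^{*}\subseteq\mathcal{B}$; combined with the first paragraph this gives $\mathcal{D}(\mathcal{A}^{*})=\mathcal{D}(\mathcal{A})$ and the stated formula \eqref{WPTE23}.
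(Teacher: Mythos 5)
Your first inclusion ($\mathcal{D}(\mathcal{A})\subseteq\mathcal{D}(\mathcal{A}^{*})$ with the stated action) is essentially the paper's own first step: integrate \eqref{WPTE24} by parts and kill the boundary terms using $v\in H_{0}^{1}$ and the symmetric version of \eqref{WPTE13}. The divergence, and the problem, is in the reverse inclusion. There you take the direct route of identifying $\mathcal{D}(\mathcal{A}^{*})$ by testing, and you correctly locate the difficulty --- forcing the degenerate boundary behaviour at $x=0$ --- but the proposal stops exactly there: ``controlling these boundary contributions \dots\ is the delicate point, but once done we obtain \dots''\ is an announcement, not an argument. Concretely, two things are missing. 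First, in the (WDP) case you cannot test with arbitrary smooth $\tilde\theta$ that are nonzero near $0$: since $a$ is only $\mathcal{C}^{1}$ on $(0,1]$, $(a\tilde\theta')'=a'\tilde\theta'+a\tilde\theta''$ need not lie in $L^{2}$, so $(0,0,\tilde\theta)$ need not belong to $\mathcal{D}(\mathcal{A})$; one has to build test functions adapted to $a$ (e.g.\ with $a\tilde\theta'$ constant near $0$, which is where $\tfrac1a\in L^{1}$ enters) and then justify the limit of the boundary term $(a\tilde\theta')(x)\overline{\theta}(x)$ as $x\to0$ to extract $\theta(0)=0$. Second, in the (SDP) case the admissible $\tilde\theta$ are only locally absolutely continuous on $(0,1]$ and may be unbounded at $0$, so the competing boundary term $\tilde\theta(x)\overline{(a\theta')}(x)$ must be controlled before you can conclude $(a\theta')(0)=0$; moreover $\sqrt{a}\,\theta'\in L^{2}$ and $a\theta\in H_{0}^{1}$ are part of membership in $\mathcal{D}(\mathcal{A})$ and do not follow from the distributional equations alone --- you would have to rerun the argument of Lemma \ref{WPTE10} for the putative adjoint element, not merely cite it. A smaller slip: $f=-v$ and the boundary values of $v$ come from varying $\tilde u$ (over all of $H^{2}\cap H_{0}^{1}$, not just $C_{c}^{\infty}$), while the $\tilde v$-test yields $u''-\kappa\theta'\in L^{2}$; as written the attributions are crossed.

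It is worth knowing that the paper avoids this entire analysis. Its proof of $\mathcal{D}(\mathcal{A}^{*})\subseteq\mathcal{D}(\mathcal{A})$ is purely functional-analytic: introducing the bounded operator $J$, it solves the variational problem \eqref{WPTE26} by a Lax--Milgram argument on $H_{0}^{1}(0,1)\times H_{a}^{1}(0,1)$ to show that both $(-\mathcal{A}^{*}+J)|_{\mathcal{D}(\mathcal{A})}$ and $(-\mathcal{A}+J)$ are surjective, and then, for $\widetilde U_{*}\in\mathcal{D}(\mathcal{A}^{*})$, produces $U_{*}\in\mathcal{D}(\mathcal{A})$ with $\langle(-\mathcal{A}+J)\Phi,U_{*}-\widetilde U_{*}\rangle=0$ for all $\Phi\in\mathcal{D}(\mathcal{A})$, whence $U_{*}=\widetilde U_{*}$ by surjectivity. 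All the degenerate boundary information is thereby absorbed into the choice of the energy space $H_{a}^{1}$, and no boundary terms at $x=0$ ever need to be examined for a general adjoint element. Either complete your test-function analysis along the lines sketched above, or switch to this surjectivity argument; as it stands the reverse inclusion is not proved.
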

\begin{proof}
We recall that $\mathcal{A}^{*}$ is defined 
$$
\forall\,\widetilde{U}={}^{t}(\tilde{u},\tilde{v},\tilde{\theta})\in\mathcal{D}(\mathcal{A}^{*}),\;\forall\,U={}^{t}(u,v,\theta)\in\mathcal{D}(\mathcal{A})\quad \left\langle\mathcal{A}U,\tilde{U}\right\rangle=\left\langle U,\mathcal{A}^{*}\tilde{U}\right\rangle.
$$
where,
$$
\mathcal{D}(\mathcal{A}^{*})=\Big\{U_{*}\in\mathcal{H}:\quad\exists\,C>0,\;\forall\,U\in\mathcal{D}(\mathcal{A}),\;|\left\langle\mathcal{A}U,U_{*}\right\rangle|\leq C\|U\|_{\mathcal{H}}\Big\}.
$$

First we show that $\mathcal{D}(\mathcal{A})\subset \mathcal{D}(\mathcal{A}^{*})$ and $\mathcal{A}^{*}$ restricted to $\mathcal{D}(\mathcal{A})$ is given as in \eqref{WPTE23}. Indeed, for all $U={}^{t}(u,v,\theta)$ and $\widetilde{U}={}^{t}(\tilde{u},\tilde{v},\tilde{\theta})$ and from \eqref{WPTE24} we have
\begin{align*}
\left\langle\mathcal{A}U,\widetilde{U}\right\rangle&=-\int_{0}^{1}u'\overline{\tilde{v}'}\,\ud x+\int_{0}^{1}v\left(\kappa\overline{\tilde{\theta}}-\overline{\tilde{u}''}\right)\,\ud x+\int_{0}^{1}\theta\left(\left((a\overline{\tilde{\theta}'}\right)'+\kappa\overline{\tilde{v}'}\right)\,\ud x.
\end{align*}
Thus, our first expected result is shown.

\medskip

Next, we prove that $\mathcal{D}(\mathcal{A}^{*})\subset\mathcal{D}(\mathcal{A})$, and for this aim we only need to prove that $\mathcal{D}(\mathcal{A}^{*})=\mathcal{D}(-\mathcal{A}^{*}+J)\subset \mathcal{D}(-\mathcal{A}+J)=\mathcal{D}(\mathcal{A})$ where $J$ is the bounded operator defined in $\mathcal{H}$ by
$$
J\left(\begin{array}{c}
u
\\
v
\\
\theta
\end{array}\right)=\left(\begin{array}{r}
-u
\\
-v
\\
\theta
\end{array}\right).
$$
We note that $H_{a}^{1}(0,1)$ is a Hilbert space with the scalar product
$$
\langle\theta,\phi\rangle_{H_{a}^{1}(0,1)}=\int_{0}^{1}\theta(s)\overline{\phi}(s)+a\theta'(s)\overline{\phi'}(s)\,\ud s. 
$$

We shall first prove that $(-\mathcal{A}^{*}+J)_{|\mathcal{D}(\mathcal{A})}$ is surjective. So let $(f,g,h)\in \mathcal{H}$ then with the assumption made on $\kappa$ there exists $(u_{*},\theta_{*})\in H_{0}^{1}(0,1)\times H_{a}^{1}(0,1)$ such that for all $(\varphi,\phi)\in H_{0}^{1}(0,1)\times H_{a}^{1}(0,1)$
\begin{align*}
\langle u_{*}',\varphi'\rangle_{L^{2}(0,1)}+\langle\theta_{*},\phi\rangle_{H_{a}^{1}(0,1)}+\langle u_{*},\varphi\rangle_{L^{2}(0,1)}-\langle\kappa u_{*}',\phi\rangle_{L^{2}(0,1)}-\langle\kappa\theta_{*},\varphi'\rangle_{L^{2}(0,1)}+\langle a\theta_{*}',\phi'\rangle_{L^{2}(0,1)}=
\\
\langle-f-g,\varphi\rangle_{L^{2}(0,1)}+\langle\kappa f'+h,\phi\rangle_{L^{2}(0,1)}
\end{align*}
as the left hand side is a inner product equivalent to the one of $H^1\times H^1_a$. This implies that for all $(\varphi,\phi)\in\left(D(0,1)\right)^{2}$, we get
\begin{align*}
\langle -u_{*}''+u_{*}+\kappa\theta_{*}',\varphi\rangle_{D'(0,1)}+\langle-(a\theta_{*}')'+\theta_{*}-\kappa u_{*}',\phi\rangle_{D'(0,1)}=\langle-f-g,\varphi\rangle_{D'(0,1)}+\langle h+\kappa f',\phi\rangle_{D'(0,1)}.
\end{align*}
%By setting $v_{*}=u_{*}+f\in H_{0}^{1}(0,1)$, we obtain
We obtain
$$
\left\{\begin{array}{l}
-u_{*}''+u_{*}+\kappa\theta_{*}'=-f-g
\\
-(a\theta_{*}')'+\theta_{*}-\kappa u_{*}'=h+\kappa f'
\end{array}\right.
$$
or equivalently, by setting $v_{*}=u_{*}+f\in H_{0}^{1}(0,1)$
\begin{equation}\label{WPTE26}
\left\{\begin{array}{l}
v_{*}-u_{*}=f
\\
u_{*}''-\kappa\theta_{*}'-v_{*}=g
\\
-(a\theta_{*}')'-\kappa v_{*}'+\theta_{*}=h
\end{array}\right.
\end{equation}
which implies that for all ${}^{t}(f,g,h)\in\mathcal{H}$ there exists a unique $U_{*}={}^{t}(u_{*},v_{*},\theta_{*})\in\mathcal{D}(\mathcal{A})$ such that
$$
(-\mathcal{A}^{*}+J)U_{*}=(-\mathcal{A}^{*}+J)\left(\begin{array}{c}
u_{*}
\\
v_{*}
\\
\theta_{*}
\end{array}\right)=\left(\begin{array}{c}
f
\\
g
\\
h
\end{array}\right).
$$
Then $(-\mathcal{A}^{*}+J)$ is surjective is deduced consequently.

\medskip

Now let $\widetilde{U}_{*}\in\mathcal{D}(\mathcal{A}^{*})$ and let's show that $\widetilde{U}_{*}$ belongs to $\mathcal{D}(\mathcal{A})$. From the definition of $\mathcal{D}(\mathcal{A}^{*})$ and the Riesz representation theorem there exists a unique $W\in\mathcal{H}$ such that
$$
\left\langle(-\mathcal{A}+J)\Phi,\widetilde{U}_{*}\right\rangle=\left\langle\Phi,W\right\rangle\quad\forall\,\Phi={}^{t}(\varphi,\psi,\phi)\in\mathcal{D}(\mathcal{A}).
$$
Since $(-\mathcal{A}^{*}+J)_{|\mathcal{D}(\mathcal{A})}$ is surjective then we set $U_{*}={}^{t}(u_{*},v_{*},\theta_{*})\in\mathcal{D}(\mathcal{A})$ such that $W=(-\mathcal{A}^{*}+J)U_{*}$ and we obtain
$$
\left\langle(-\mathcal{A}+J)\Phi,\widetilde{U}_{*}\right\rangle=\left\langle\Phi,(-\mathcal{A}^{*}+J)U_{*}\right\rangle\quad \forall\,\Phi={}^{t}(\varphi,\psi,\phi)\in\mathcal{D}(\mathcal{A})
$$
or equivalently,
\begin{equation}\label{WPTE25}
\left\langle(-\mathcal{A}+J)\Phi,\left(U_{*}-\widetilde{U}_{*}\right)\right\rangle=0\quad \forall\,\Phi={}^{t}(\varphi,\psi,\phi)\in\mathcal{D}(\mathcal{A}).
\end{equation}

Now we need to prove that $(-\mathcal{A}+J)$ is surjective. For this aim let's consider ${}^{t}(f,g,h)\in\mathcal{H}$ and we have to look for $(u,v,\theta)\in\mathcal{D}(\mathcal{A})$ such that 
$$
(-\mathcal{A}+J)\left(\begin{array}{c}
u
\\
v
\\
\theta
\end{array}\right)=\left(\begin{array}{c}
f
\\
g
\\
h
\end{array}\right)
$$
which can be recast as follow
$$
\left\{\begin{array}{l}
v=-u-f
\\
-u''+\kappa\theta'-v=g
\\
-(a\theta')'+\kappa v'+\theta=h.
\end{array}\right.
$$
Except the first line this correspond to the same problem as \eqref{WPTE26} and it turns out that $(-\mathcal{A}+J)$ is surjective. 

By coming back to \eqref{WPTE25} this allow us to choose $\Phi\in\mathcal{D}(\mathcal{A})$ such that $(-\mathcal{A}+J)\Phi=\left(U_{*}-\widetilde{U}_{*}\right)$ then we obtain that $\|\widetilde{U}-\widetilde{U}_{*}\|_{\mathcal{H}}=0$ which implies that $\widetilde{U}_{*}=U_{*}$. Thus $\widetilde{U}_{*}\in\mathcal{D}(\mathcal{A})$ and this shows that $\mathcal{D}(\mathcal{A}^{*})\subset\mathcal{D}(\mathcal{A})$ and $\mathcal{A}^{*}$ is as given in \eqref{WPTE23}. This conclude the proof.
\end{proof}
Now we have proved that $\mathcal{A}$ is densely defined, dissipative and closed operator. Besides, from Proposition \ref{WPTE2} and Proposition \ref{WPTE3} for every $U=(u,v,\theta)\in\mathcal{D}(\mathcal{A})=\mathcal{D}(\mathcal{A}^{*})$ we have
$$
\mathrm{Re}\left\langle\mathcal{A}^{*}U,U\right\rangle_{H}=\mathrm{Re}\left\langle U,\mathcal{A}U\right\rangle_{H}=-\int_{0}^{1}a(s)|\theta'(s)|^{2}\,\ud s
$$
this means that $\mathcal{A}^{*}$ is a dissipative operator as well. Therefore, according to \cite[Proposition 3.1.11]{TW} $\mathcal{A}$ is m-dissipative and by the Lumer-Phillips theorem \cite[Theorem 4.3]{Pazy} we have the following:
\begin{thm}
The operator $\mathcal{A}$ is a generator of a $C_{0}$-semigroup of contraction in the space $\mathcal{H}$. %Furthermore, $0\in\rho(\mathcal{A})$, where $\rho(\mathcal{A})$ stand for the resolvent set of $\mathcal{A}$, and $\mathcal{A}^{-1}$ is a compact operator in $\mathcal{H}$. 
\end{thm}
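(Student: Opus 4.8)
The plan is to obtain the generation statement as a direct consequence of the Lumer--Phillips theorem, for which it suffices to show that $\mathcal{A}$ is densely defined and m-dissipative on the Hilbert space $\mathcal{H}$. Density of $\mathcal{D}(\mathcal{A})$ is immediate: the domain contains all triples of functions that are smooth and compactly supported in $(0,1)$, and these are already dense in $H_{0}^{1}(0,1)\times L^{2}(0,1)\times L^{2}(0,1)=\mathcal{H}$, so I would dispose of this point in one line. Dissipativity of $\mathcal{A}$ is nothing new, being exactly the content of Proposition \ref{WPTE2}, whose identity \eqref{WPTE14} gives $\re\langle\mathcal{A}U,U\rangle_{\mathcal{H}}=-\int_{0}^{1}a|\theta'|^{2}\,\ud s\le 0$ for every $U\in\mathcal{D}(\mathcal{A})$.

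The substantive point is m-dissipativity, and here I would avoid proving surjectivity of $\lambda I-\mathcal{A}$ by hand and instead invoke the Hilbert-space dual characterization \cite[Proposition 3.1.11]{TW}: a densely defined, closed, dissipative operator whose adjoint is also dissipative is automatically m-dissipative. Closedness of $\mathcal{A}$ has already been established in the preceding proposition, and the explicit description of the adjoint in Proposition \ref{WPTE3}, in particular the equality $\mathcal{D}(\mathcal{A}^{*})=\mathcal{D}(\mathcal{A})$ together with the formula \eqref{WPTE23}, is precisely what makes this route available. Combining that formula with the computation \eqref{WPTE24}, I would verify that
$$
\re\langle\mathcal{A}^{*}U,U\rangle_{\mathcal{H}}=\re\langle U,\mathcal{A}U\rangle_{\mathcal{H}}=-\int_{0}^{1}a(s)|\theta'(s)|^{2}\,\ud s\le 0\qquad\forall\,U\in\mathcal{D}(\mathcal{A})=\mathcal{D}(\mathcal{A}^{*}),
$$
so that $\mathcal{A}^{*}$ is dissipative as well.

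With $\mathcal{A}$ densely defined, closed, dissipative, and with dissipative adjoint, \cite[Proposition 3.1.11]{TW} yields that $\mathcal{A}$ is m-dissipative, whence the Lumer--Phillips theorem \cite[Theorem 4.3]{Pazy} delivers that $\mathcal{A}$ generates a $C_{0}$-semigroup of contractions on $\mathcal{H}$. I expect the present theorem to be essentially a packaging step: all the genuine difficulty has been absorbed into the earlier propositions, and in particular into the surjectivity argument of Proposition \ref{WPTE3} that identifies $\mathcal{A}^{*}$ and its domain. The real obstacle, and the reason m-dissipativity is not entirely routine here, is that pinning down the adjoint requires the careful boundary analysis at the degenerate endpoint $x=0$ — controlling the limits of $(a\theta')(x)\tilde\theta(x)$ and related boundary terms in both the weak and the strong degeneracy regimes, as carried out via Lemma \ref{WPTE9} and Lemma \ref{WPTE10}. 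Once those facts are in hand, the verification above is straightforward.
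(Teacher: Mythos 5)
Your proposal is correct and follows essentially the same route as the paper: the authors likewise combine density, dissipativity (Proposition \ref{WPTE2}), closedness, and dissipativity of the adjoint (via Proposition \ref{WPTE3} and the identity $\re\langle\mathcal{A}^{*}U,U\rangle_{\mathcal{H}}=\re\langle U,\mathcal{A}U\rangle_{\mathcal{H}}$), then invoke \cite[Proposition 3.1.11]{TW} for m-dissipativity and conclude by Lumer--Phillips. Your added one-line justification of density is a harmless supplement to what the paper merely asserts.
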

\section{Stability result}\label{PSTE}
\subsection{Strong stability}
In this section we shall prove that the semigroup associated to the operator $\mathcal{A}$ is strongly stable. For this aim we use the criteria of Arendt-Batty \cite{Are-Bat}, following which the $C_{0}$-semigroup of contractions $e^{tA}$ in a Banach space is strongly stable, if $A$ has no pure imaginary eigenvalues and $\sigma(A)\cap i\R$ contains only a countable number of elements. In particular, in our case we will show that $\sigma(A)\cap i\R$ is an empty set. 
\begin{prop}
For all $\lambda\in\R$, the operator $i\lambda I-\mathcal{A}$ is injective.
\end{prop}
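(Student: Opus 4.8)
The plan is to show that any $U=(u,v,\theta)\in\mathcal{D}(\mathcal{A})$ lying in the kernel of $i\lambda I-\mathcal{A}$ must vanish. Suppose $(i\lambda I-\mathcal{A})U=0$, i.e. $\mathcal{A}U=i\lambda U$. The first move is to extract information from the dissipation identity of Proposition \ref{WPTE2}. Taking the $\mathcal{H}$-inner product of $\mathcal{A}U=i\lambda U$ with $U$ gives $\langle\mathcal{A}U,U\rangle_{\mathcal{H}}=i\lambda\|U\|_{\mathcal{H}}^{2}$, whose real part is zero because $\lambda$ is real. Comparing with \eqref{WPTE14} yields $\int_{0}^{1}a(s)|\theta'(s)|^{2}\,\ud s=0$. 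Since $a>0$ on $(0,1]$ by \eqref{ITE2}, this forces $\theta'=0$ almost everywhere on $(0,1)$, so $\theta$ is constant; the boundary condition $\theta(1)=0$ (valid in both the (WDP) and (SDP) settings) then gives $\theta\equiv0$.

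With $\theta=0$ in hand I would unwind the three scalar equations encoded in $\mathcal{A}U=i\lambda U$, namely $v=i\lambda u$, $u''-\kappa\theta'=i\lambda v$, and $(a\theta')'-\kappa v'=i\lambda\theta$. The third equation collapses to $-\kappa v'=0$, and since $\kappa>0$ we get $v'=0$; as $v\in H_{0}^{1}(0,1)$, the only constant with vanishing trace is $v\equiv0$. The first equation then reads $i\lambda u=0$.

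It remains to conclude $u=0$, and here I would split on $\lambda$. If $\lambda\neq0$, then $i\lambda u=0$ immediately gives $u=0$. If $\lambda=0$, the second equation becomes $u''=0$ (recall $\theta'=0$), so $u$ is affine, and $u\in H_{0}^{1}(0,1)$ forces $u\equiv0$. In either case $U=(u,v,\theta)=0$, which proves injectivity.

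I do not expect a serious obstacle: the argument is the standard ``no imaginary eigenvalues'' computation driven entirely by the dissipation identity \eqref{WPTE14}. The only points requiring a little care are the degeneracy at $x=0$---where one must note that $a$ vanishes only on the null set $\{0\}$, so $a|\theta'|^{2}=0$ a.e. still yields $\theta'=0$ a.e.---and the separate treatment of $\lambda=0$, where the first equation alone no longer delivers $u=0$ and one must fall back on the wave equation $u''=0$ together with the homogeneous Dirichlet conditions.
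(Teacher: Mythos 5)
Your proof is correct and follows essentially the same route as the paper: take the real part of the inner product with $U$ to get $\int_0^1 a|\theta'|^2=0$, deduce $\theta\equiv0$, then $v\equiv0$ from the third equation and the boundary conditions, then $u\equiv0$. The only cosmetic difference is that the paper avoids your case split on $\lambda$ by noting that once $v=0$ and $\theta'=0$ the second equation gives $u''=i\lambda v=0$ for every $\lambda$, so the Dirichlet conditions finish the argument uniformly.
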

\begin{proof}
%Since the resolvent of the operator $\mathcal{A}$ is compact then we just need to prove that for every $\lambda\in\R$ the operator $(i\lambda I-\mathcal{A})$ is injective. 
Let $\lambda\in\R$ and let's consider the following eigenvalue problem 
\begin{equation}\label{PSTE32}
\mathcal{A}(u,v,\theta)=i\lambda(u,v,\theta)
\end{equation}
with $(u,v,\theta)\in\mathcal{D}(\mathcal{A})$ which written as follows
\begin{equation}\label{PSTE26}
\left\{\begin{array}{l}
v=i\lambda u
\\
u''-\kappa\theta'=i\lambda v
\\
(a\theta')'-\kappa v'=i\lambda\theta.
\end{array}\right.
\end{equation}
Using the dissipation property of the operator $\mathcal{A}$ then by taking the real part of the inner product in $\mathcal{H}$ of \eqref{PSTE32} with $(u,v,\theta)$, we have
$$
\re\langle\mathcal{A}(u,v,\theta),(u,v,\theta)\rangle_{\mathcal{H}}=-\int_{0}^{1}a(x)|\theta'(x)|^{2}\,\ud x= 0.
$$
Since $a>0$ in $(0,1)$ then $\theta'=0$ a.e. in $(0,1)$ and due to the boundary conditions we have $\theta=0$ a.e. in $(0,1)$. Putting this in equation \eqref{PSTE26} we find that $v'=0$ a.e. in $(0,1)$ then again by the boundary conditions we follow that $v=0$ a.e. in $(0,1)$. This together with \eqref{PSTE26} gives that $u''=0$ in $(0,1)$, which by the boundary conditions leads to $u=0$ in $(0,1)$. So that, the operator $i\lambda I-\mathcal{A}$ is injective.
\end{proof}
At this point we set the function
\begin{equation*}
W(x)=\int_{0}^{x}\frac{\ud s}{a(s)}\;\;\text{ if }\, \frac{1}{a}\in L^{1}(0,1) \;\quad\text{ or }\;\quad W(x)=\int_{x}^{1}\frac{\ud s}{a(s)}\;\;\text{ if }\; \frac{1}{a}\notin L^{1}(0,1).
\end{equation*}
We recall the following technical lemma of Young inequality for the integral operators.
%\begin{lem}\cite[Theorem 0.3.1]{Sog}\label{PSTE27}
%Assume that $X$ and $Y$ are measurable spaces, $K:X\times Y\rightarrow\R$ is a measurable function and $p,\,q,\,r\geq 1$ are such that $\ds\frac{1}{q}=\frac{1}{p}+\frac{1}{r}-1$. We suppose for some $C > 0$ that
%$$
%\int_{Y}|K(x,y)|^{r}\,\ud y\leq C \quad\forall\,x\in X
%$$
%and
%$$
%\int_{Y}|K(x,y)|^{r}\,\ud x\leq C \quad\forall\,y\in Y.
%$$
%It then follows that
%$$
%\left(\int_{X}\left|\int_{Y}K(x,y)f(y)\,\ud y\right|^{q}\,\ud x\right)^{\frac{1}{q}}\leq\left(\int_{Y}|f(y)|^{p}\,\ud y\right)^{\frac{1}{p}}.
%$$
%\end{lem}
\begin{prop}
If $W\in L^{1}(0,1)$, the operator $\mathcal{A}$ is invertible and $\mathcal{A}^{-1}$ is compact.
\end{prop}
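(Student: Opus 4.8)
The plan is to establish invertibility by solving the resolvent equation $\mathcal{A}U=F$ (the case $\lambda=0$) and then to read off compactness from a weighted Poincar\'e inequality that is precisely equivalent to the hypothesis $W\in L^{1}(0,1)$. Writing $F={}^{t}(f,g,h)\in\mathcal{H}$ and $U={}^{t}(u,v,\theta)$, the system $\mathcal{A}U=F$ decouples in a convenient order: the first line forces $v=f\in H_{0}^{1}(0,1)$; the third line becomes the \emph{scalar} degenerate elliptic problem $(a\theta')'=h+\kappa f'=:\phi\in L^{2}(0,1)$, with the endpoint conditions carried by $H_{a}^{1}(0,1)$, and this does not involve $u$; only afterwards does the second line $u''-\kappa\theta'=g$ determine $u$. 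I would therefore solve for $\theta$ first, then for $u$.

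The crucial observation is that, for $\theta$ vanishing at the relevant endpoint, Cauchy--Schwarz gives the pointwise bound
\[
|\theta(x)|^{2}=\Big|\int\theta'\Big|^{2}\le\Big(\int\frac{\ud s}{a}\Big)\Big(\int a|\theta'|^{2}\,\ud s\Big)\le W(x)\,\|\sqrt{a}\,\theta'\|_{L^{2}(0,1)}^{2},
\]
where the integration runs from $0$ to $x$ when $\tfrac1a\in L^{1}$ and from $x$ to $1$ otherwise. Integrating in $x$ and using $\int_{0}^{1}W=\|W\|_{L^{1}(0,1)}$ yields the Poincar\'e inequality $\|\theta\|_{L^{2}(0,1)}^{2}\le\|W\|_{L^{1}(0,1)}\,\|\sqrt{a}\,\theta'\|_{L^{2}(0,1)}^{2}$, valid precisely because $W\in L^{1}(0,1)$. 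This makes the form $b(\theta,\psi)=\int_{0}^{1}a\,\theta'\overline{\psi'}\,\ud s$ coercive on $H_{a}^{1}(0,1)$, so Lax--Milgram produces a unique $\theta\in H_{a}^{1}(0,1)$ with $b(\theta,\psi)=-\int\phi\overline{\psi}$ for all $\psi$; since then $(a\theta')'=\phi\in L^{2}$ we get $a\theta'\in H^{1}(0,1)$, and Lemma~\ref{WPTE10} supplies $(a\theta')(0)=0$ in the (SDP) case. With $\theta$ fixed, a second application of Lax--Milgram on $H_{0}^{1}(0,1)$ to $\int u'\overline{\varphi'}=-\int g\overline{\varphi}+\kappa\int\theta\overline{\varphi'}$ (the right-hand side bounded because $\theta\in L^{2}$) gives a unique $u\in H_{0}^{1}(0,1)$ with $u''-\kappa\theta'=g\in L^{2}(0,1)$. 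Thus $U\in\mathcal{D}(\mathcal{A})$, $\mathcal{A}U=F$, and $\mathcal{A}^{-1}$ exists and is bounded.

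For compactness I would take a bounded sequence $F_{n}$ in $\mathcal{H}$, set $U_{n}={}^{t}(u_{n},v_{n},\theta_{n})=\mathcal{A}^{-1}F_{n}$, and show each component converges along a subsequence in the topology of $\mathcal{H}=H_{0}^{1}\times L^{2}\times L^{2}$. The middle component is immediate: $v_{n}=f_{n}$ is bounded in $H_{0}^{1}(0,1)$, hence precompact in $L^{2}(0,1)$ by Rellich. For $\theta_{n}$, away from $0$ the relation $a\theta_{n}'\in H^{1}$ with $(a\theta_{n}')'$ bounded in $L^{2}$ forces $\theta_{n}$ bounded in $H^{2}(\varepsilon,1)$, hence precompact in $L^{2}(\varepsilon,1)$, while near $0$ the pointwise bound above gives $\int_{0}^{\varepsilon}|\theta_{n}|^{2}\,\ud x\le C\int_{0}^{\varepsilon}W(x)\,\ud x\to0$ uniformly in $n$; so $\theta_{n}$ is precompact in $L^{2}(0,1)$. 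Finally $u_{n}$ is bounded in $H_{0}^{1}(0,1)$, so $u_{n}\to u$ in $L^{2}$; to upgrade this to $H_{0}^{1}$ I would test $(u_{n}-u_{m})''=(g_{n}-g_{m})+\kappa(\theta_{n}-\theta_{m})'$ against $u_{n}-u_{m}$ and integrate by parts to obtain $\tfrac12\|u_{n}'-u_{m}'\|_{L^{2}}^{2}\le\|g_{n}-g_{m}\|_{L^{2}}\|u_{n}-u_{m}\|_{L^{2}}+\tfrac{\kappa^{2}}{2}\|\theta_{n}-\theta_{m}\|_{L^{2}}^{2}$, whose right-hand side tends to $0$ along the subsequence already extracted. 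Hence $U_{n}$ converges in $\mathcal{H}$ and $\mathcal{A}^{-1}$ is compact.

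The main obstacle is the behaviour at the degeneracy point $x=0$: both the coercivity needed for solvability and the equi-integrability of the tails $\int_{0}^{\varepsilon}|\theta_{n}|^{2}$ needed for compactness hinge on the single quantitative input $\int_{0}^{\varepsilon}W\to0$, that is on $W\in L^{1}(0,1)$; everything else is standard interior elliptic regularity plus Rellich. Some care is also needed to justify the vanishing of the boundary terms at $0$ in the integrations by parts, which is exactly where Lemmas~\ref{WPTE9} and~\ref{WPTE10} enter.
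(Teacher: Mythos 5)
Your proposal is correct, but it takes a genuinely different route from the paper. The paper proves surjectivity by \emph{explicitly integrating} the system: it writes $\theta$ and $u'$ as integral operators with explicit kernels $K(x,s)$, $M(x,s)$, $N(x,s)$ built from $W$ and $\widetilde{W}$, and then obtains compactness by checking that these kernels lie in $L^{2}((0,1)^{2})$ (using the boundedness of $x\mapsto xW(x)$ in the strong case), so that the relevant operators are Hilbert--Schmidt; the compactness of $\mathcal{A}^{-1}$ then follows componentwise together with Rellich for $v_{n}=f_{n}$. You instead obtain $\theta$ by Lax--Milgram on $H^{1}_{a}(0,1)$, with coercivity coming from the weighted Poincar\'e inequality $\|\theta\|^{2}_{L^{2}}\le\|W\|_{L^{1}}\|\sqrt{a}\,\theta'\|^{2}_{L^{2}}$, and you prove compactness of the $\theta$-component by combining interior regularity on $[\varepsilon,1]$ with the uniform tail bound $\int_{0}^{\varepsilon}|\theta_{n}|^{2}\le C\int_{0}^{\varepsilon}W$, upgrading the $u$-component to $H^{1}_{0}$ convergence via an energy identity. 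Both arguments use $W\in L^{1}(0,1)$ as the single quantitative input, but in different guises: the paper uses it to make the kernels square-integrable, you use it for coercivity and for equi-integrability of the tails near the degeneracy. The paper's approach yields more (explicit resolvent formulas and the Hilbert--Schmidt property, which is stronger than compactness); yours is more robust and would survive situations where explicit kernels are unavailable. Your treatment of the boundary condition $(a\theta')(0)=0$ in the (SDP) case via Lemma~\ref{WPTE10} is exactly the right way to close that gap, and the step where $\|g_{n}-g_{m}\|\,\|u_{n}-u_{m}\|\to0$ works because $\|g_{n}-g_{m}\|$ is merely bounded while $\|u_{n}-u_{m}\|_{L^{2}}\to0$ along the extracted subsequence; this is fine but worth stating explicitly.
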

\begin{proof}
Let $(f,g,h)\in\mathcal{H}$ and we are looking for $(u,v,\theta)\in\mathcal{D}(\mathcal{A})$ solution of the following problem
\begin{equation*}
\mathcal{A}\left(\begin{array}{c}
u
\\
v
\\
\theta
\end{array}\right)=\left(\begin{array}{c}
f
\\
g
\\
h
\end{array}\right),
\end{equation*}
which can be recast as follow
\begin{equation}\label{PSTE14}
\left\{\begin{array}{l}
v=f
\\
u''-\kappa\theta'=g
\\
(a\theta')'-\kappa v'=h.
\end{array}\right.
\end{equation}
This implies that $v$ is well-defined and we need to do so for $u$ and $\theta$. By substituting $v$ in the third line of \eqref{PSTE14} and integrating the second line two times, we obtain
\begin{equation}\label{PSTE15}
\left\{\begin{array}{l}
\ds u(x)=\kappa\int_{0}^{x}\theta(s)\,\ud s+\int_{0}^{x}\int_{0}^{s}g(\tau)\,\ud\tau\,\ud s+C_{1}x
\\
(a\theta')'(x)=\kappa f'(x)+h(x),
\end{array}\right.
\end{equation}
where the constant $C_{1}$ is defined as follows in order to guaranty that $u(1)=0$, 
\begin{equation}\label{PSTE20}
C_{1}=-\kappa\int_{0}^{1}\theta(s)\,\ud s-\int_{0}^{1}\int_{0}^{s}g(\tau)\,\ud\tau\,\ud s.
\end{equation}
It follows that $u$ is well-defined and belongs to $H_{0}^{1}(0,1)$ as long as $\theta$ exists and belongs to $L^{2}(0,1)$ and we have
\begin{equation}\label{PSTE22}
u'(x)=\kappa\theta(x)+\int_{0}^{x}g(s)\,\ud s+C_{1}.
\end{equation}
The rest of the proof is devoted to find $\theta$ to prove that the triplet $(u,v,\theta)$ belongs to $\mathcal{D}(\mathcal{A})$ and $\mathcal{A}^{-1}$ is a compact operator in $\mathcal{H}$. For this aim we will discuss separately the two case of the weak and strong degeneracy.

\medskip

\textbf{(WDP):} Integration two times the second equation of \eqref{PSTE15} then we obtain
\begin{align}
\theta(x)&=\int_{0}^{x}\frac{1}{a(s)}\int_{0}^{s}(\kappa f'+h)(\tau)\,\ud\tau\,\ud s+C_{2}W(x)\label{PSTE16}
\\
&=W(x)\int_{0}^{x}(\kappa f'+h)(s)\,\ud s-\int_{0}^{x}W(s)(\kappa f'+h)(s)\,\ud s+C_{2}W(x),\label{PSTE17}
\end{align}
with $C_{2}$ is set in order to guaranty that $\theta(1)=0$ and is given by
\begin{align}\label{PSTE18}
C_{2}&=-\frac{1}{W(1)}\int_{0}^{1}\frac{1}{a(s)}\int_{0}^{s}(\kappa f'+h)(\tau)\,\ud\tau\,\ud s\nonumber
\\
&=\int_{0}^{1}\frac{W(s)-W(1)}{W(1)}(\kappa f'+h)(s)\,\ud s,
\end{align}
where an integration by parts have been performed in \eqref{PSTE17} and \eqref{PSTE18}. This allow us to write $\theta$ in the following form
\begin{equation}\label{PSTE19}
\theta(x)=\int_{0}^{1}K(x,s)(\kappa f'+h)(s)\,\ud s,
\end{equation}
where
$$
K(x,s)=\left\{\begin{array}{ll}
\frac{W(s)(W(x)-W(1))}{W(1)}&\text{if } 0\leq s<x\leq 1
\\
\frac{W(s)}{W(1)}-1&\text{if } 0\leq x\leq s\leq 1.
\end{array}\right.
$$

Now we can observe from \eqref{PSTE16} that $\theta\in L^{2}(0,1)$, absolutely continuous in $[0,1]$, and satisfying $a\theta'\in H^{1}(0,1)$ since 
\begin{align*}
a\theta'=\int_{0}^{x}(\kappa f'+h)(s)\,\ud s \quad\text{ and }\quad (a\theta')'=(\kappa f'+h)(s),
\end{align*}
where clearly both of them belong to $L^{2}(0,1)$, also since $\frac{1}{\sqrt{a}}\in L^{2}(0,1)$ then we can verify that $\sqrt{a}\theta'\in L^{2}(0,1)$ and obviously we have $\theta(0)=\theta(1)=0$. All these prove that the triplet $(u,v,\theta)\in\mathcal{D}(\mathcal{A})$ defined in \eqref{PSTE14}, \eqref{PSTE15} and \eqref{PSTE16} and as a result $\mathcal{A}$ is invertible and from \eqref{PSTE14}, \eqref{PSTE15} and \eqref{PSTE19} we have
$$
\mathcal{A}^{-1}\left(\begin{array}{c}
f
\\
g
\\
h
\end{array}\right)=\left(\begin{array}{c}
u
\\
v
\\
\theta
\end{array}\right),
$$
where
\begin{align}\label{PSTE24}
\left\{\begin{array}{l}
\ds u(x)=\kappa\int_{0}^{x}\int_{0}^{t}\frac{1}{a(s)}\int_{0}^{s}(\kappa f'+h)(\tau)\,\ud\tau\,\ud s\,\ud t+C_{2}\int_{0}^{x}W(s)\,\ud s+\int_{0}^{x}\int_{0}^{s}g(\tau)\,\ud\tau\,\ud s+C_{1}x
\\
v(x)=f(x)
\\
\ds \theta(x)=\int_{0}^{1}K(x,s)(\kappa f'+h)(s)\,\ud s.
\end{array}\right.
\end{align}
Now it's still to show that $\mathcal{A}^{-1}$ is a compact operator (in particular bounded) in $\mathcal{H}$. For this we set
$$
\widetilde{W}(x)=\int_{0}^{x}W(s)\,\ud s,
$$
which is well-defined for all $x\in[0,1]$ since $W\in L^{2}(0,1)$ and we recall the expression of $C_{1}$ in \eqref{PSTE20} and performing the following calculation based on some integration by parts after substituting \eqref{PSTE17} into \eqref{PSTE20}, we follow
\begin{align}\label{PSTE21}
C_{1}&=\kappa\int_{0}^{1}(1-s)W(s)(kf'+h)(s)\,\ud s-\kappa\int_{0}^{1}W(s)\int_{0}^{s}(\kappa f'+h)(\tau)\,\ud\tau\,\ud s\nonumber
\\
&-\kappa\widetilde{W}(1)C_{2}+\int_{0}^{1}(s-1)g(s)\,\ud s\nonumber
\\
&=\kappa\int_{0}^{1}(1-s)W(s)(kf'+h)(s)\,\ud s+
\kappa\int_{0}^{1}\widetilde{W}(s)(\kappa f'+h)(s)\,\ud s
%\kappa\int_{0}^{1}W(s)(\kappa f'+h)(s)\,\ud s
\nonumber
\\
&
-\kappa\widetilde{W}(1)\int_{0}^{1}(\kappa f'+h)(s)\,\ud s
-\kappa\widetilde{W}(1)C_{2}+\int_{0}^{1}(s-1)g(s)\,\ud s\nonumber
\\
&=
\int_{0}^{1}
\kappa\left(
\left(1-\frac{\widetilde{W}(1)}{W(1)}-s\right)W(s)
+\widetilde{W}(s)
%\kappa(\widetilde{W}(s)
%+W(s)-\widetilde{W}(1))
\right)
(\kappa f'+h)(s)\,\ud s+
\int_{0}^{1}(s-1)g(s)\,\ud s,
\end{align}
where we have using the expression of $C_{2}$ in \eqref{PSTE18}. 
\medskip

Now by substituting \eqref{PSTE19} and \eqref{PSTE21} into \eqref{PSTE22} we can write $u'$ in the following form
\begin{align}\label{PSTE23}
u'(x)&=
\int_{0}^{1}\kappa\left(\left(1-\frac{\widetilde{W}(1)}{W(1)}-s\right)W(s)
+\widetilde{W}(s)+K(x,s)\right)(\kappa f'+h)(s)\,\ud s\nonumber
\\
&+\int_{0}^{1}(s-1)g(s)\,\ud s+\int_{0}^{x}g(s)\,\ud s\nonumber
\\
&=\int_{0}^{1}M(x,s)(\kappa f'+h)(s)\,\ud s+\int_{0}^{1}N(x,s)g(s)\,\ud s,
\end{align}
where
$$
M(x,s)=\kappa\left(\left(1-\frac{\widetilde{W}(1)}{W(1)}-s\right)W(s)
+\widetilde{W}(s)+K(x,s)\right)
%\kappa(\widetilde{W}(s)+W(s)-\widetilde{W}(1)+K(x,s))+\frac{W(1)-W(s)}{W(1)}\widetilde{W}(1)}
$$
and
$$
N(x,s)=\left\{\begin{array}{ll}
s&\text{if }0\leq s<x\leq 1
\\
s-1&\text{if }0\leq x\leq s\leq 1.
\end{array}\right.
$$

We set
$$
Az=\int_{0}^{1}M(x,s)z(s)\,\ud s,\quad Bz=\int_{0}^{1}N(x,s)z(s)\,\ud s\text{ and }Cz=\int_{0}^{1}K(x,s)z(s)\,\ud s
$$
which are compact operators in $L^{2}(0,1)$ as a Hilbert-Schmidt operators since the kernels $M$, $N$ and $K$ belong to $L^{2}((0,1)^{2})$ as $\ds\frac{1}{a}$ belongs to $L^{1}(0,1)$ and $W$ is an increasing function and belongs to $L^{1}(0,1)$. 

Now let's consider a bounded sequence $(f_{n},g_{n},h_{n})\in\mathcal{H}$ and we set
$$
\left(\begin{array}{c}
u_{n}
\\
v_{n}
\\
\theta_{n}
\end{array}\right)=\mathcal{A}^{-1}\left(\begin{array}{c}
f_{n}
\\
g_{n}
\\
h_{n}
\end{array}\right)
$$
and we shall prove that there exists a sub-sequence $(u_{\varphi(n)},v_{\varphi(n)},\theta_{\varphi(n)})$ that converge in $\mathcal{H}$. In another words we need to show that there exists $(\tilde{u},\tilde{v},\tilde{\theta})\in\mathcal{H}$ such that $u_{\varphi(n)}'$, $v_{\varphi(n)}$ and $\theta_{\varphi(n)}$ converge respectively to $\tilde{u}'$, $\tilde{v}$ and $\theta$ in $L^{2}(0,1)$. Following to \eqref{PSTE23} and \eqref{PSTE24} and the above notations we can write
$$
\left\{\begin{array}{l}
u_{n}'=\kappa\,A\,f_{n}'+A\,h_{n}+B\,g_{n}
\\
v_{n}=f_{n}
\\
\theta_{n}= \kappa\,C\,f_{n}'+C\,h_{n}.
\end{array}\right.
$$
First, observe that the sequence $(f_{n})$ is bounded in $H^{1}_{0}(0,1)$ then the compactness of the injection $H_{0}^{1}(0,1)\hookrightarrow L^{2}(0,1)$ implies there exists a sub-sequence $(v_{\varphi(n)})$ that converges in $L^{2}(0,1)$ to $\tilde{v}$. Next, as $A$, $B$ and $C$ are compact operators in $L^{2}(0,1)$ and the sequences $(f_{\varphi(n)}')$, $(g_{\varphi(n)})$ and $(h_{\varphi(n)})$ are bounded in $L^{2}(0,1)$ then we extract a sub-sequence (still noted the same) such that $u_{\varphi(n)}'=\kappa\,A\,f_{n}'+A\,h_{n}+B\,g_{n}$ converges to $z$ in $L^{2}(0,1)$ and $\theta_{\varphi(n)}= \kappa\,C\,f_{\varphi(n)}'+C\,h_{\varphi(n)}$ converge as well in $L^{2}(0,1)$ to $\tilde{\theta}$. Finally, the sequence $(u_{\varphi(n)})$ belongs to $H_{0}^{1}(0,1)$ and $(u_{\varphi(n)}')$ is a Cauchy sequence in $L^{2}(0,1)$ which implies that $(u_{\varphi(n)})$ a Cauchy sequence in $H_{0}^{1}(0,1)$, therefore $(u_{\varphi(n)})$ converge to some $\tilde{u}$ in $H_{0}^{1}(0,1)$. And this complete the proof of this part and prove that $\mathcal{A}^{-1}$ is compact.

\medskip

\textbf{(SDP):} As done in the previous case we can solve the second equation of \eqref{PSTE15} by taking into account the boundary conditions and proceeding by integration by parts then
\begin{align}
\theta(x)&=-\int_{x}^{1}\frac{1}{a(s)}\int_{0}^{s}(\kappa f'+h)(\tau)\,\ud\tau\,\ud s\label{PSTE25}
\\
&=-W(x)\int_{0}^{x}(\kappa f'+h)(s)\,\ud s-\int_{x}^{1}W(s)(\kappa f'+h)(s)\,\ud s\nonumber
\\
&=\int_{0}^{1}K(x,s)(\kappa f'+h)(s)\,\ud s\label{PSTE28}
\end{align}
where
$$
K(x,s)=\left\{\begin{array}{ll}
-W(x)&\text{if }0\leq s<x\leq 1
\\
-W(s)&\text{if }0\leq x\leq s\leq 1.
\end{array}\right.
$$
Observing that for all $x\in(0,1]$
$$
\int_{0}^{x}W(s)\,\ud s=\int_{0}^{x}\int_{s}^{1}\frac{\ud\tau}{a(\tau)}\,\ud s\geq \int_{0}^{x}\int_{x}^{1}\frac{\ud\tau}{a(\tau)}\,\ud s=xW(x),
$$
which proves that the function $x\mapsto xW(x)$ is bounded in $[0,1]$ and converge to $0$ as $x$ goes to $0$. This allow us to write after an integration by parts that
\begin{equation}\label{PSTE29}
\int_{0}^{1}W(x)\,\ud x=\int_{0}^{1}\int_{x}^{1}\frac{\ud s}{a(s)}\,\ud x=\int_{0}^{1}\frac{x}{a(x)}\,\ud x.
\end{equation}
%For every $x\in[0,1]$ we have
%\begin{align*}
%\int_{0}^{1}|K(x,s)|\,\ud s&=\int_{0}^{x}|K(x,s)|\,\ud s+\int_{x}^{1}|K(x,s)|\,\ud s
%\\
%&=xW(x)+\int_{x}^{1}W(s)\,\ud s.
%\end{align*}
%And so
%$$
%\sup_{0\leq x\leq 1}\int_{0}^{1}|K(x,s)|\,\ud s\leq \sup_{0\leq x\leq 1}\left(xW(x)\right)+\int_{0}^{1}W(s)\,\ud s.
%$$
%Thus from the Young's inequality of Lemma \ref{PSTE27} we derive that $\theta\in L^{2}(0,1)$.
\\
Moreover we have also
\begin{align*}
\int_{0}^{1}\int_{0}^{1}|K(x,s)|^{2}\,\ud s\ud x&=\int_{0}^{1}\int_{0}^{x}|K(x,s)|^{2}\,\ud s\,\ud x+\int_{0}^{1}\int_{x}^{1}|K(x,s)|^{2}\,\ud s\,\ud x
\\
&=\int_{0}^{1}x|W(x)|^{2}\,\ud x+\int_{0}^{1}\int_{x}^{1}|W(s)|^{2}\,\ud s\,\ud x
\\
&=\int_{0}^{1}x|W(x)|^{2}\,\ud x+\int_{0}^{1}\int_{0}^{s}|W(s)|^{2}\,\ud x\,\ud s
\\
&=2\int_{0}^{1}x|W(x)|^{2}\,\ud x<+\infty,
\end{align*}
as the function $x\mapsto xW(x)$ is bounded in $[0,1]$ and $W\in L^{1}(0,1)$ which implies that $K$ belongs to $L^{2}((0,1)\times(0,1))$. Therefore, we conclude that
$$
f\mapsto\int_{0}^{1}K(x,s)f(s)\,\ud s
$$
is a compact operator in $L^{2}(0,1)$ as a Hilbert-Schmidt operator.

It is clear that from \eqref{PSTE25} that $\theta$ is locally absolutely continuous in $(0,1]$, $a\theta'=\int_{0}^{x}(\kappa f'+h)(s)\,\ud s$ belong to $H^{1}(0,1)$, $(a\theta')(0)=\theta(1)=0$ and $\ds \sqrt{a}\theta'\in L^{2}(0,1)$in fact, by H\"older inequality and \eqref{PSTE29} we have
\begin{align*}
\int_{0}^{1}a(x)|\theta'(x)|^{2}\,\ud x&=\int_{0}^{1}\frac{1}{a(x)}\left|\int_{0}^{x}(\kappa f'+h)(s)\,\ud s\right|^{2}\,\ud x
\\
&\leq\int_{0}^{1}\frac{x}{a(x)}\int_{0}^{x}(\kappa f'+h)^{2}(s)\ud s\,\ud x
\\
&\leq\int_{0}^{1}\frac{x}{a(x)}\,\ud x\int_{0}^{1}(\kappa f'+h)^{2}(x)\,\ud x
\\
&\leq \int_{0}^{1}W(x)\,\ud x\int_{0}^{1}(\kappa f'+h)^{2}(x)\,\ud x<+\infty.
\end{align*}
This prove that $(u,v,\theta)\in\mathcal{D}(\mathcal{A})$ and consequently $\mathcal{A}$ is invertible and thanks to \eqref{PSTE14}, \eqref{PSTE15}, \eqref{PSTE25} and \eqref{PSTE28}, one gets
$$
\mathcal{A}^{-1}\left(\begin{array}{c}
f
\\
g
\\
h
\end{array}\right)=\left(\begin{array}{c}
u
\\
v
\\
\theta
\end{array}\right)
$$
where
$$
\left\{\begin{array}{l}
\ds u(x)=-\kappa\int_{0}^{x}\int_{t}^{1}\frac{1}{a(s)}\int_{0}^{s}(\kappa f'+h)(\tau)\,\ud\tau\,\ud s\,\ud t+\int_{0}^{x}\int_{0}^{s}g(\tau)\,\ud\tau\,\ud s+C_{1}x
\\
v(x)=f(x)
\\
\ds\theta(x)=\int_{0}^{1}K(x,s)(\kappa f'+h)(s)\,\ud s.
\end{array}\right.
$$
Now we need to prove that $\mathcal{A}^{-1}$ is a compact operator. We shall proceed the same way as the previous case. So now we have only to write $u'$ in same form as \eqref{PSTE23}, if so the rest of the proof is exactly the same done in the case of the week damped problem. First, we want to recall that
\begin{equation}\label{PSTE30}
u'(x)=\kappa\int_{0}^{1}K(x,s)(\kappa f'+h)(s)\,\ud s+\int_{0}^{x}g(s)\,\ud s+C_{1},
\end{equation}
where by substituting \eqref{PSTE25} into \eqref{PSTE20} and integrating by parts
\begin{align}\label{PSTE31}
C_{1}&=\kappa\int_{0}^{1}\int_{t}^{1}\frac{1}{a(s)}\int_{0}^{s}(\kappa f'+h)(\tau)\,\ud\tau\,\ud s\,\ud t-\int_{0}^{1}\int_{0}^{s}g(\tau)\,\ud\tau\,\ud s\nonumber
\\
&=\kappa\int_{0}^{1}\frac{s}{a(s)}\int_{0}^{s}(\kappa f'+h)(\tau)\,\ud\tau\,\ud s+\int_{0}^{1}(s-1)g(s)\,\ud s\nonumber
\\
&=\kappa\int_{0}^{1}\int_{s}^{1}\frac{\tau}{a(\tau)}\,\ud\tau(\kappa f'+h)(s)\,\ud s+\int_{0}^{1}(s-1)g(s)\,\ud s.
\end{align}
Substituting \eqref{PSTE31} into \eqref{PSTE30}, we derive by setting
$$
\widetilde{W}(x)=\int_{x}^{1}\frac{s}{a(s)}\,\ud s,
$$
that
$$
u'(x)=\int_{0}^{1}M(x,s)(\kappa f'+h)(s)\,\ud s+\int_{0}^{1}N(x,s)g(s)\,\ud s
$$
where
$$
M(x,s)=\kappa K(x,s)+\kappa\widetilde{W}(s)
$$
and $N$ is the same as above (of the first case). Now noting that all kernels $K$, $M$ and $N$ belong to $L^{2}((0,1)^{2})$ the integral operators defining $u'$ are compact operators in $L^{2}(0,1)$. With this, all ingredients are proven that allow us to prove in the same way the compactness property of the inverse of $\mathcal{A}$. This conclude the prove the second case and conclude the proof. 
\end{proof}
\begin{thm}
We assume that $W\in L^{1}(0,1)$ then the imaginary axis is a subset of the resolvent set of $\mathcal{A}$, i.e. $i\R\subset\rho(\mathcal{A})$. And the semigroup $e^{t\mathcal{A}}$ is strongly stable in $\mathcal{H}$ which means that
$$
\lim_{t\to+\infty}\|e^{t\mathcal{A}}(u_{0},v_{0},\theta_{0})\|_{\mathcal{H}}=0\quad\forall\,(u_{0},v_{0},\theta_{0}) \in \mathcal{D}(\mathcal{A}).
$$ 
\end{thm}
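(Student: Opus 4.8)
The plan is to combine the two structural facts already established—that under the hypothesis $W\in L^{1}(0,1)$ the operator $\mathcal{A}$ is invertible with compact inverse, and that $i\lambda I-\mathcal{A}$ is injective for every real $\lambda$—and then to invoke the Arendt--Batty criterion recalled at the start of this section. The first task is to upgrade injectivity to invertibility on the whole imaginary axis, i.e. to prove $i\R\subset\rho(\mathcal{A})$.

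For this I would treat $\lambda=0$ and $\lambda\neq0$ separately. The case $\lambda=0$ is already the invertibility statement. For $\lambda\neq 0$, since $\mathcal{A}^{-1}$ exists and is compact, I factor
$$
i\lambda I-\mathcal{A}=-\mathcal{A}\bigl(I-i\lambda\,\mathcal{A}^{-1}\bigr),
$$
so that $i\lambda I-\mathcal{A}$ is boundedly invertible on $\mathcal{H}$ if and only if $I-i\lambda\,\mathcal{A}^{-1}$ is. Now $i\lambda\,\mathcal{A}^{-1}$ is compact, hence $I-i\lambda\,\mathcal{A}^{-1}$ is Fredholm of index zero, and by the Fredholm alternative it is invertible as soon as it is injective. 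If $(I-i\lambda\,\mathcal{A}^{-1})x=0$, then $x=i\lambda\,\mathcal{A}^{-1}x\in\mathcal{D}(\mathcal{A})$, and applying $\mathcal{A}$ gives $(i\lambda I-\mathcal{A})x=0$; the injectivity Proposition then forces $x=0$. Thus $I-i\lambda\,\mathcal{A}^{-1}$ is injective, hence invertible, so $i\lambda\in\rho(\mathcal{A})$. Combining the two cases yields $i\R\subset\rho(\mathcal{A})$, and in particular $\sigma(\mathcal{A})\cap i\R=\emptyset$.

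With this in hand the hypotheses of Arendt--Batty become immediate. The semigroup $e^{t\mathcal{A}}$ is a contraction semigroup, hence bounded; $\sigma(\mathcal{A})\cap i\R=\emptyset$ is (trivially) countable; and since $\sigma(\mathcal{A}^{*})=\overline{\sigma(\mathcal{A})}$ while $i\R$ is stable under conjugation, $\mathcal{A}^{*}$ carries no eigenvalue on the imaginary axis either. The Arendt--Batty theorem therefore gives $\lim_{t\to+\infty}\|e^{t\mathcal{A}}U_{0}\|_{\mathcal{H}}=0$ for every $U_{0}\in\mathcal{H}$, a fortiori for $U_{0}=(u_{0},v_{0},\theta_{0})\in\mathcal{D}(\mathcal{A})$. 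I expect the only genuinely delicate point to be the reduction of spectral membership to injectivity, namely the observation that compactness of $\mathcal{A}^{-1}$ makes $i\lambda\in\rho(\mathcal{A})$ equivalent to mere injectivity of $i\lambda I-\mathcal{A}$; everything else is bookkeeping, since the two preceding Propositions supply precisely the compactness and the injectivity required.
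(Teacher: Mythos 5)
Your proposal is correct and follows essentially the same route as the paper: the paper also deduces $i\R\subset\rho(\mathcal{A})$ from the compactness of $\mathcal{A}^{-1}$ together with the absence of imaginary eigenvalues, and then applies the Arendt--Batty criterion. You merely spell out the Fredholm-alternative step that the paper leaves implicit.
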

\begin{proof}
Since $\mathcal{A}$ have no eigenvalue in the imaginary axis and the inverse of $\mathcal{A}$ is compact then we conclude that $i\R$ belongs to the resolvent set $\rho(\mathcal{A})$ which implies that in particular the strong stability of the semigroup.
\end{proof}
%%%%%%%%%%%%%%%%%%%%%%%%%%%%%%%%%%%%%%%%%%%%%%%%%%%%%%%%%%%%%%%%%%%%%%%%%%%%%%%%%%%%%%%%%%%%%%%%%%%%%%%%%%%%%%%%%%%%%%%%%%%%%%%%%%%%%%%%%%%%%%%%%%%%%%%%%%%%%%%%%%%%%%%%%%%%%%%%%%%%%%%%%%%%%%%%%%%%%%%%%%%%%%%%%%%%%%%%%%%%%%%%%%%%%%%%%%%%%%%%%
\subsection{Uniform decay rate}
This section is devoted to prove the exponential stability of system \eqref{ITE1}. This proof is based in some resolvent estimate namely we have the following 
\begin{prop}\cite{huang,pruss}\label{PSTE1}
Let $e^{t\mathcal{B}}$ be a bounded $C_{0}$-semigroup on a Hilbert space $X$ with generator $\mathcal{B}$ such that $i\R\subset\rho(\mathcal{B})$. Then $e^{t\mathcal{B}}$ is exponentially stable i.e. there exist $C>0$ and $\omega>0$ such that
$$
\|e^{t\mathcal{B}}\|_{\mathcal{L}(X)}\leq C\,e^{-\omega t}\quad\forall\, t\geq 0,
$$
if and only if 
$$
\sup \left\{\|(i\lambda I-\mathcal{B})^{-1}\|_{\mathcal{L}(X)}, \lambda \in \mathbb{R}\right\}<\infty.
$$
\end{prop}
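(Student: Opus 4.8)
The statement is the Gearhart--Prüss--Huang characterization of exponential stability on Hilbert spaces, so the plan is to reproduce that classical argument, the whole point being that the Hilbert structure of $X$ licenses Plancherel's theorem. The necessity is immediate: if $\|e^{t\mathcal{B}}\|_{\mathcal{L}(X)}\le C e^{-\omega t}$ then for every $\lambda\in\R$ one has $\re(i\lambda)=0>-\omega$, hence $i\lambda\in\rho(\mathcal{B})$, and the Laplace representation $(i\lambda I-\mathcal{B})^{-1}=\int_{0}^{\infty}e^{-i\lambda t}e^{t\mathcal{B}}\,\ud t$ yields $\|(i\lambda I-\mathcal{B})^{-1}\|_{\mathcal{L}(X)}\le\int_{0}^{\infty}\|e^{t\mathcal{B}}\|\,\ud t\le C/\omega$, uniformly in $\lambda$. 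All the work is in the converse.

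So assume $M:=\sup_{\lambda\in\R}\|(i\lambda I-\mathcal{B})^{-1}\|<\infty$ and write $T(t)=e^{t\mathcal{B}}$, $R(s)=(sI-\mathcal{B})^{-1}$, $\|T(t)\|\le M_{0}$. The first step would be to upgrade the bound on the imaginary axis to a uniform bound on the whole open right half-plane. Since $T$ is bounded, one already has $\|R(s)\|\le M_{0}/\re(s)$ there, which settles the region $\re(s)\ge 1/(2M)$; for $0<\re(s)\le 1/(2M)$ I would use the resolvent identity in the form $R(s)=R(i\lambda)\bigl(I+\re(s)\,R(i\lambda)\bigr)^{-1}$ with $\lambda=\im(s)$, the smallness $\re(s)M\le 1/2$ making $I+\re(s)R(i\lambda)$ invertible by a Neumann series. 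This produces a finite constant $\tilde M:=\sup_{\re(s)>0}\|R(s)\|$.

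The heart of the proof is to show $\int_{0}^{\infty}\|T(t)x\|^{2}\,\ud t\le K^{2}\|x\|^{2}$ for all $x\in X$. For $\varepsilon>0$ the map $t\mapsto e^{-\varepsilon t}T(t)x$ lies in $L^{2}(0,\infty;X)$ with Fourier transform $R(\varepsilon+i\lambda)x$, so Plancherel gives $2\pi\int_{0}^{\infty}e^{-2\varepsilon t}\|T(t)x\|^{2}\,\ud t=\int_{\R}\|R(\varepsilon+i\lambda)x\|^{2}\,\ud\lambda$. To bound the right-hand side uniformly in $\varepsilon\in(0,1)$ I would transfer the estimate from the line $\re(s)=1$, where the crude bound $\|T(t)x\|\le M_{0}\|x\|$ already makes $\int_{\R}\|R(1+i\lambda)x\|^{2}\,\ud\lambda\le\pi M_{0}^{2}\|x\|^{2}$ finite and proportional to $\|x\|^{2}$. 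Applying the resolvent identity between $\varepsilon+i\lambda$ and $1+i\lambda$ together with $\|R(\varepsilon+i\lambda)\|\le\tilde M$ yields $\|R(\varepsilon+i\lambda)x\|\le(1+\tilde M)\|R(1+i\lambda)x\|$, hence $\int_{\R}\|R(\varepsilon+i\lambda)x\|^{2}\,\ud\lambda\le(1+\tilde M)^{2}\pi M_{0}^{2}\|x\|^{2}$; letting $\varepsilon\to0^{+}$ by monotone convergence gives the claimed $L^{2}$ bound with $K^{2}=\tfrac12(1+\tilde M)^{2}M_{0}^{2}$ independent of $x$.

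Finally I would close with the Datko--Pazy argument: from $T(t)x=T(t-s)T(s)x$ and boundedness one gets $t\|T(t)x\|^{2}\le M_{0}^{2}\int_{0}^{t}\|T(s)x\|^{2}\,\ud s\le M_{0}^{2}K^{2}\|x\|^{2}$, so $\|T(t)\|^{2}\le M_{0}^{2}K^{2}/t\to0$; thus $\|T(t_{0})\|<1$ for some $t_{0}$, which by the semigroup property forces exponential decay. The main obstacle is precisely the uniform-in-$\varepsilon$ control of $\int_{\R}\|R(\varepsilon+i\lambda)x\|^{2}\,\ud\lambda$: the bare supremum bound $\tilde M$ is not integrable in $\lambda$, and the decisive idea is the resolvent-identity transfer to the line $\re(s)=1$, where integrability with an $\|x\|^{2}$ constant comes for free. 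This step, together with the essential use of Plancherel (hence of the Hilbert structure of $X$), is what makes the converse work.
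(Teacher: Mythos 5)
Your argument is a correct and complete rendition of the classical Gearhart--Pr\"uss--Huang proof: the necessity via the Laplace representation of the resolvent, the Neumann-series extension of the uniform resolvent bound to the open right half-plane, the Plancherel identity on vertical lines combined with the resolvent-identity transfer to $\re(s)=1$ to get the uniform-in-$\varepsilon$ square-integrability, and the Datko--Pazy closing step are all sound. The paper itself offers no proof of this proposition --- it is stated as a known result with citations to Huang and Pr\"uss --- and those references contain essentially the argument you reproduce, so there is nothing to flag beyond noting that your write-up correctly supplies what the paper takes for granted.
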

%\begin{prop}\cite[Theorem 2.4]{borichevtomilov}\label{PSTE2}
%Let $e^{t\mathcal{B}}$ be a bounded $C_{0}$-semigroup on a Hilbert space $X$ with generator $\mathcal{B}$ such that $i\R\subset\rho(\mathcal{B})$. Then $e^{t\mathcal{B}}$ is polynomially stable with order $\ds\frac{1}{\gamma}$ i.e. there exists $C>0$ such that
%$$
%\|e^{t\mathcal{B}}u\|_{X}\leq \frac{C}{t^{\frac{1}{\gamma}}}\|u\|_{\mathcal{D}(\mathcal{B})}\quad\forall\,u\in\mathcal{D}(\mathcal{B})\;\forall\, t\geq 0,
%$$
%if and only if
%$$
%\limsup_{|\lambda|\rightarrow\infty}\||\lambda|^{-\gamma}(i\lambda I-\mathcal{B})^{-1}\|_{X}<\infty.
%$$
%\end{prop}
Our main result is given by the following
\begin{thm}
	\label{th: stabilization}
We assume $a(x)$ satisfied (WDP) or (SDP) such that $W\in L^{1}(0,1)$. Then 
\begin{equation}\label{est: spect 1}
\limsup_{\lambda \in \mathbb{R}, |\lambda|\rightarrow\infty}\|(i\lambda I-\mathcal{A})^{-1}\|_{\mathcal{L}(\mathcal{H})}<\infty.
\end{equation}
And there exist $C>0$ and $\omega>0$ such that
$$
\| e^{t\mathcal{A}}U\|_{\mathcal{H}}\le C e^{-\omega t} \, \|U\|_{\mathcal{H}}\quad \forall\,U\in\mathcal{H}.
$$  
\end{thm}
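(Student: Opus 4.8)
The plan is to deduce the exponential decay from the resolvent characterisation of Pr\"uss (Proposition \ref{PSTE1}). Since $e^{t\mathcal{A}}$ is a contraction semigroup and we have already shown $i\mathbb{R}\subset\rho(\mathcal{A})$, it only remains to verify the uniform bound \eqref{est: spect 1}. As $\lambda\mapsto\|(i\lambda I-\mathcal{A})^{-1}\|$ is continuous on $\mathbb{R}$, hence bounded on every compact set, the whole difficulty lies in the regime $|\lambda|\to\infty$. I would argue by contradiction: if \eqref{est: spect 1} fails, there exist $\lambda_n\in\mathbb{R}$ with $|\lambda_n|\to\infty$ and $U_n=(u_n,v_n,\theta_n)\in\mathcal{D}(\mathcal{A})$ with $\|U_n\|_{\mathcal{H}}=1$ such that $F_n:=(i\lambda_n I-\mathcal{A})U_n\to 0$ in $\mathcal{H}$; replacing $U_n$ by its conjugate, it suffices to treat $\lambda_n\to+\infty$. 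Writing $F_n=(f_n^1,f_n^2,f_n^3)$, this means
\begin{align*}
i\lambda_n u_n-v_n&=f_n^1,\\
i\lambda_n v_n-u_n''+\kappa\theta_n'&=f_n^2,\\
i\lambda_n\theta_n-(a\theta_n')'+\kappa v_n'&=f_n^3,
\end{align*}
with $f_n^1\to0$ in $H_0^1(0,1)$ and $f_n^2,f_n^3\to0$ in $L^2(0,1)$.

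First I would extract the dissipation. Taking the real part of $\langle F_n,U_n\rangle_{\mathcal{H}}$ and using \eqref{WPTE14} gives
$$
\int_0^1 a(x)|\theta_n'(x)|^2\,\ud x=\re\langle F_n,U_n\rangle_{\mathcal{H}}\leq\|F_n\|_{\mathcal{H}}\|U_n\|_{\mathcal{H}}=\|F_n\|_{\mathcal{H}}\longrightarrow 0,
$$
so $\sqrt{a}\,\theta_n'\to0$ in $L^2(0,1)$. Next, writing $\theta_n(x)=\int_0^x\theta_n'$ (resp. $-\int_x^1\theta_n'$) and applying Cauchy--Schwarz with the weight $1/a$ exactly as in the closedness proof, one obtains $|\theta_n(x)|\leq W(x)^{1/2}\|\sqrt{a}\,\theta_n'\|_{L^2(0,1)}$ for every $x$; integrating in $x$ and using $W\in L^1(0,1)$ yields $\|\theta_n\|_{L^2(0,1)}\leq\|W\|_{L^1(0,1)}^{1/2}\|\sqrt{a}\,\theta_n'\|_{L^2(0,1)}\to0$. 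Thus the entire thermal component, in both the $L^2$ norm and the dissipation norm, tends to zero.

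The \emph{heart} of the argument is to propagate this to the mechanical energy $\|u_n'\|_{L^2}^2+\|v_n\|_{L^2}^2$, which by normalisation tends to $1$. Using $i\lambda_n u_n=v_n+f_n^1$, I would first test the second equation against $\bar{u}_n$: every coupling integral carries a factor $\theta_n$ or $\sqrt{a}\,\theta_n'$ and is therefore $o(1)$, and the forcing contribution is absorbed through $i\lambda_n u_n=v_n+f_n^1$, leaving the equipartition identity $\|u_n'\|_{L^2}^2-\|v_n\|_{L^2}^2\to0$. To break the equipartition I would then apply a spatial multiplier $q(x)\bar{u}_n'$ to the second equation, with $q\in\mathcal{C}^1([0,1])$ chosen nonnegative, monotone and vanishing at $x=0$ at the rate $\sqrt{a}$ so that $q/\sqrt{a}$ stays bounded near the degeneracy. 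After substituting $\lambda_n^2|u_n|^2=|v_n+f_n^1|^2$, the interior terms produce a positive multiple of $\int_0^1 q'(|u_n'|^2+|v_n|^2)$; the boundary contributions at $x=0$ drop out because $q(0)=0$; and the only genuinely degenerate coupling term $\kappa\,\re\int_0^1 q\,\theta_n'\,\bar{u}_n'$ is dominated directly by $\|q/\sqrt{a}\|_\infty\,\|\sqrt{a}\,\theta_n'\|_{L^2}\,\|u_n'\|_{L^2}\to0$. Combining this with the equipartition identity is designed to force $\|u_n'\|_{L^2},\|v_n\|_{L^2}\to0$, whence $\|U_n\|_{\mathcal{H}}\to0$, contradicting $\|U_n\|_{\mathcal{H}}=1$.

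I expect the principal obstacle to be twofold. Near $x=0$ the dissipation controls $\theta_n'$ only through $\sqrt{a}\,\theta_n'$, so every wave--heat coupling must be organised so that the uncontrolled factor $1/\sqrt{a}$ is compensated by a multiplier degenerating at the rate of $a$. More seriously, since $v_n'=i\lambda_n u_n'-(f_n^1)'$ is of size $\lambda_n$, the multipliers generate integrals containing factors such as $\lambda_n\langle\theta_n,\cdot\rangle$ and $\lambda_n\langle\sqrt{a}\,\theta_n',\cdot\rangle$; the decisive technical point is to show, by a further multiplier argument together with the integration of the large factor onto the (small) data $f_n^1$, that these $\lambda_n$-weighted thermal quantities remain $o(1)$, so that no large factor survives in the final energy identity. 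In the strong-degeneracy case there is the additional difficulty that $\theta_n$ need not vanish at $x=0$ (only $a\theta_n'$ does), so the boundary contributions at the degenerate endpoint must be discarded using Lemma \ref{WPTE9} and Lemma \ref{WPTE10} rather than a Dirichlet trace. Once \eqref{est: spect 1} is established, Proposition \ref{PSTE1} applied with $\mathcal{B}=\mathcal{A}$ (a contraction semigroup with $i\mathbb{R}\subset\rho(\mathcal{A})$) yields constants $C,\omega>0$ with $\|e^{t\mathcal{A}}U\|_{\mathcal{H}}\leq Ce^{-\omega t}\|U\|_{\mathcal{H}}$, which is the claimed uniform decay.
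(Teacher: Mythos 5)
Your overall framework (contradiction sequences $\|U_n\|_{\mathcal H}=1$, $(i\lambda_nI-\mathcal A)U_n\to0$, then $\sqrt a\,\theta_n'\to0$ from dissipativity and $\|\theta_n\|_{L^2}\to0$ from the weight $W\in L^1$) coincides with the paper's first two lemmas and is correct. The gap is in the step you yourself call the heart of the argument: killing the mechanical energy. The spatial multiplier $q\bar u_n'$ applied to $u_n''-\kappa\theta_n'+\lambda_n^2u_n=F_n$ yields, after taking real parts,
\begin{equation*}
\tfrac12 q(1)|u_n'(1)|^2-\tfrac12 q(0)|u_n'(0)|^2-\tfrac12\int_0^1q'\left(|u_n'|^2+\lambda_n^2|u_n|^2\right)\ud x=\re\int_0^1(F_n+\kappa\theta_n')\,q\,\overline{u_n'}\,\ud x,
\end{equation*}
so the ``positive multiple of $\int q'(|u_n'|^2+|v_n|^2)$'' sits on the same side as an \emph{uncontrolled boundary trace}: with $q(0)=0$ and $q'\ge c>0$ you only obtain $c\,\|u_n'\|^2\le q(1)|u_n'(1)|^2+o(1)$, which bounds the energy \emph{by} the trace rather than proving it small. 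Nothing in the problem gives $|u_n'(1)|\to0$; the best available estimate (the paper's trace lemma, proved with exactly such cutoff multipliers) is $|u_n'(1)|=o(\lambda_n^{1/2})$, which is useless here. For the undamped wave equation these traces do \emph{not} vanish, so their smallness would have to come from the coupling — precisely what this multiplier does not see. Your second flagged difficulty is also unresolved: quantities like $\lambda_n\|\sqrt a\,\theta_n'\|$ are genuinely not bounded (the dissipation only gives $\|\sqrt a\,\theta_n'\|^2\le\|F_n\|_{\mathcal H}$ with no rate relative to $\lambda_n$), so asserting that the $\lambda_n$-weighted thermal terms are $o(1)$ is not a proof; they are only harmless when paired against $u_n$, which is of size $\lambda_n^{-1}$.

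The paper closes this step by a different mechanism that you should compare with your plan: in the heat equation the coupling term $i\kappa\lambda_nu_n'$ is the \emph{dominant} one, so one can solve for the wave quantity,
\begin{equation*}
u_n'=\frac{i}{\kappa\lambda_n}G_n-\frac1\kappa\theta_n-\frac{i}{\kappa\lambda_n}(a\theta_n')',
\end{equation*}
pair with $\overline{u_n'}$, integrate the last term by parts (replacing $u_n''$ via the wave equation so that the large factor $\lambda_n$ lands on $u_n$, which is $O(\lambda_n^{-1})$), and absorb the resulting boundary contributions with trace estimates that are only needed at the scale $o(\lambda_n^{1/2})$ — exactly the scale at which they appear there, divided by $\lambda_n$. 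This gives $\|u_n'\|\to0$ directly, then $\lambda_n\|u_n\|\to0$ and $\|v_n\|\to0$ from the first equation. Without either this device or a proof that the boundary traces of $u_n'$ vanish, your argument does not reach the contradiction. The final appeal to Pr\"uss's theorem is fine.
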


%Let $(f,g,h)\in \ds\mathcal{H}=H_{0}^{1}(0,1)\times L^{2}(0,1)\times L^{2}(0,1)$
%and $(u,v,\theta)\in \mathcal{D}(\mathcal{A})$ solution of
%\begin{equation}
%	\label{eq: est. spect.}
%\mathcal{A}(u,v,\theta)-i\lambda(u,v,\theta)=(f,g,h)
%\end{equation}

%The goal is to prove the following proposition where $\|.\|=\|.\|_{L^2(0,1)}$.
%%%%%%%%%%%%%%%%%%%%%%%%%
%
%  Proposition
%
%%%%%%%%%%%%%%%%%%%%%%%%%
%\begin{prop}
%	\label{prop: est. spect.}
%There exists $C>0$ such that for every $|\lambda|\ge 1$, and $(u,v,\theta)$ solutions of \eqref{eq: est. spect.}
%\begin{equation}
%	\label{est: spect 1}
%|\lambda|\|u\|+\|u'\|+\| v\|+\|\theta\|\le C\left( |\lambda|\|f\|+\| f'\|+\|g\|+\|h\|\right)
%\end{equation}
%\end{prop}
%Theorem~\ref{th: stabilization} is a consequence of Propositions~\ref{PSTE2} and \ref{prop: est. spect.}.

\medskip

We prove \eqref{est: spect 1} by  contradiction. As $\mathcal{A}-i\lambda $ is invertible, we only to prove the estimation for 
large $|\lambda|$. If \eqref{est: spect 1} is false for every $C>0$, there exist sequences $(f_n, g_n,\theta_n)$, $(u_n,v_n,\theta_n)$ and $\lambda_n$ such that
$|\lambda_n|\to \infty$, 
\begin{equation}
	\label{syst: spect 1}
\begin{cases}
v_{n}-i\lambda_{n}u_{n}=f_{n},%\,\longrightarrow\,0\quad\text{ in }\;H_{0}^{1}(0,1)
\\
u_{n}''-\kappa\theta_{n}'-i\lambda_{n}v_{n}=g_{n}, %\,\longrightarrow\,0\quad\text{ in }\;L^{2}(0,1),
\\
(a\theta_{n}')'-\kappa v_{n}'-i\lambda_{n}\theta_{n}=h_{n},%\,\longrightarrow\,0\quad\text{ in }\;L^{2}(0,1),
\end{cases}
\end{equation}
where $\| f_n'\|+\|g_n\|+\|h_n\|\to 0$ 
as $n\to 0$, and $ \|u'_n\|+\| v_n\|+\|\theta_n\|=1$ for all $n\in\N$ by normalization with $\|\,.\,\|=\|\,.\,\|_{L^2(0,1)}$. 
As $v_n=f_n+i\lambda_{n}u_{n}$, from \eqref{syst: spect 1} we have
\begin{equation}
	\label{syst: spect 2}
\begin{cases}
u_{n}''-\kappa\theta_{n}'+\lambda_{n}^2u_{n}=g_{n}+i\lambda_nf_n=F_n, %\,\longrightarrow\,0\quad\text{ in }\;L^{2}(0,1),
\\
(a\theta_{n}')'-i\kappa \lambda_{n}u_{n}'-i\lambda_{n}\theta_{n}=h_{n}+\kappa f'_n=G_n.%\,\longrightarrow\,0\quad\text{ in }\;L^{2}(0,1),
\end{cases}
\end{equation}
%and from the assumptions $\|F_n\|+\|G_n\|\to 0$.
%%%%%%%%%%%%%%%%%%%%%%%%%
%
%  Lemma
%
%%%%%%%%%%%%%%%%%%%%%%%%%
\begin{lem}
	\label{lem: est. spec. 1}
Let $\theta_n$ and $\lambda_n$ satisfying \eqref{syst: spect 2}, we have
\[
\int_0^1a(x)|\theta'_n(x)|^2\,\ud x\to 0 \text{ as }n\to\infty.
\]
\end{lem}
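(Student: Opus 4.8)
The plan is to read the desired smallness straight off the dissipation identity, so essentially no analysis is required. First I would observe that the system \eqref{syst: spect 1} is precisely the resolvent equation $(\mathcal{A}-i\lambda_n)U_n=(f_n,g_n,h_n)$ with $U_n={}^t(u_n,v_n,\theta_n)$: line by line it reproduces the components of $\mathcal{A}U_n-i\lambda_n U_n$. I would therefore pair this identity with $U_n$ itself in the inner product of $\mathcal{H}$, obtaining
\[
\langle \mathcal{A}U_n,U_n\rangle_{\mathcal{H}}-i\lambda_n\|U_n\|_{\mathcal{H}}^2=\langle (f_n,g_n,h_n),U_n\rangle_{\mathcal{H}}.
\]

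Next I would take real parts. Since $\lambda_n\in\R$ and $\|U_n\|_{\mathcal{H}}^2\in\R$, the term $i\lambda_n\|U_n\|_{\mathcal{H}}^2$ is purely imaginary and drops out, while Proposition \ref{WPTE2} gives $\re\langle \mathcal{A}U_n,U_n\rangle_{\mathcal{H}}=-\int_0^1 a(x)|\theta_n'(x)|^2\,\ud x$. Hence
\[
\int_0^1 a(x)|\theta_n'(x)|^2\,\ud x=-\re\langle (f_n,g_n,h_n),U_n\rangle_{\mathcal{H}}\le\left|\langle (f_n,g_n,h_n),U_n\rangle_{\mathcal{H}}\right|.
\]

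Finally I would close the estimate by Cauchy--Schwarz in $\mathcal{H}$. The normalization $\|u_n'\|+\|v_n\|+\|\theta_n\|=1$ gives $\|U_n\|_{\mathcal{H}}^2=\|u_n'\|^2+\|v_n\|^2+\|\theta_n\|^2\le 1$, so $U_n$ is bounded, whereas the hypothesis $\|f_n'\|+\|g_n\|+\|h_n\|\to 0$ yields $\|(f_n,g_n,h_n)\|_{\mathcal{H}}\to 0$. Therefore
\[
\int_0^1 a(x)|\theta_n'(x)|^2\,\ud x\le\|(f_n,g_n,h_n)\|_{\mathcal{H}}\,\|U_n\|_{\mathcal{H}}\longrightarrow 0,
\]
which is the claim. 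There is essentially no obstacle here; the only points to watch are using \eqref{syst: spect 1} rather than \eqref{syst: spect 2}, so that the dissipative pairing is transparent, and keeping the real part so that the oscillatory $i\lambda_n$ contribution is discarded exactly. This quantity controls the dissipated energy and will feed the subsequent, genuinely harder, estimates that bound $\|u_n'\|$, $\|v_n\|$ and $\|\theta_n\|$ separately.
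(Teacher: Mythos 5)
Your proof is correct. The mechanism is the same as the paper's --- take the real part of the pairing of the resolvent equation with the solution, so that all skew-adjoint contributions (including the $i\lambda_n\|U_n\|_{\mathcal{H}}^2$ term and the coupling terms) drop out and only the dissipation $-\int_0^1 a|\theta_n'|^2\,\ud x$ survives on the left --- but your execution is more economical: you work directly with the first-order system \eqref{syst: spect 1} and simply invoke the already-established identity \eqref{WPTE14} of Proposition \ref{WPTE2}, whereas the paper works with the reduced system \eqref{syst: spect 2} (with $v_n$ eliminated), pairs the two equations with $u_n$ and $\theta_n$ separately, multiplies the first by $-i\lambda_n$, adds, and has to check by hand that $\kappa\pd{\theta_n',u_n}-\kappa\pd{u_n',\theta_n}$ is real before taking real parts. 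Your route buys a shorter argument with no repeated integration by parts (the boundary-term issues at the degenerate endpoint having been dealt with once and for all in Proposition \ref{WPTE2}); the paper's route keeps the computation at the level of the second-order system that the subsequent lemmas actually manipulate, and in particular makes visible that only the bounds on $|\lambda_n|\|u_n\|$ and $\|\theta_n\|$ are needed rather than the full norm $\|U_n\|_{\mathcal{H}}$. The only hypotheses you use implicitly --- that $U_n\in\mathcal{D}(\mathcal{A})$ so that \eqref{WPTE14} applies, and that $(f_n,g_n,h_n)\in\mathcal{H}$ so that $\|(f_n,g_n,h_n)\|_{\mathcal{H}}\to 0$ follows from $\|f_n'\|+\|g_n\|+\|h_n\|\to 0$ --- are both part of the contradiction setup, so there is no gap.
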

\begin{proof}
We denote $\pd{\,.\,,\,.\,}_{L^{2}(0,1)}=\pd{\,.\,,\,.\,}$.
We take the inner product of the first equation of \eqref{syst: spect 2} with $u_n$ and the second equation with $\theta_n$, we obtain
after integration by parts
\begin{equation}
\begin{cases}
-\|u_{n}'\|^2-\kappa\pd{\theta_{n}',u_n}+\lambda_{n}^2\|u_{n}\|^2=\pd{F_n,u_n} \\
-\pd{a\theta_{n}',\theta_n'}-i\kappa \lambda_{n}\pd{u_{n}',\theta_n}-i\lambda_{n}\|\theta_{n}\|^2=\pd{G_n,\theta_n}.
\end{cases}
\end{equation}
We multiply the first equation by $-i\lambda_n$ and we add with the second, we obtain
\begin{align}
	\label{eq:  spect 3}
&i\lambda_n\|u_{n}'\|^2+i\lambda_n\kappa\pd{\theta_{n}',u_n}-i\lambda_{n}^3\|u_{n}\|^2-\pd{a\theta_{n}',\theta_n'}
-i\kappa \lambda_{n}\pd{u_{n}',\theta_n}-i\lambda_{n}\|\theta_{n}\|^2\\
&\quad\quad=-i\lambda_n\pd{F_n,u_n}+\pd{G_n,\theta_n}. \notag
\end{align}
As $\pd{\theta_{n}',u_n}-\pd{u_{n}',\theta_n}=\pd{\theta_{n}',u_n}+\pd{u_{n},\theta_n'}$ is real, taking the real part of \eqref{eq:  spect 3}
we obtain the result as $|\lambda_n|\|u_n\|$ and $\|\theta_n\|$ are bounded.
\end{proof}
%%%%%%%%%%%%%%%%%%%%%%%%%
%
%  Lemma
%
%%%%%%%%%%%%%%%%%%%%%%%%%
\begin{lem}
	\label{lem: est. spec. 2}
Let $\theta_n$ and $\lambda_n$ satisfying \eqref{syst: spect 2}, then
\[
\int_0^1|\theta_n(x)|^2\,\ud x\to 0 \text{ as }n\to\infty.
\]	
\end{lem}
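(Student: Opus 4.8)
The plan is to read off $\|\theta_n\|^2$ from the second equation of \eqref{syst: spect 2} tested against $\theta_n$, and to show that the only dangerous quantity in that identity, the coupling term $\pd{u_n',\theta_n}$, vanishes in the limit. This will combine the dissipation bound already secured in Lemma \ref{lem: est. spec. 1} with a weighted estimate on $u_n$ that is controlled precisely by the hypothesis $W\in L^1(0,1)$.

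First I would take the $L^2$-inner product of the second equation of \eqref{syst: spect 2} with $\theta_n$ and integrate by parts exactly as in the proof of Lemma \ref{lem: est. spec. 1} (the boundary term at $x=1$ vanishes since $\theta_n(1)=0$, and the one at $x=0$ vanishes by $\theta_n(0)=0$ in the (WDP) case and by $(a\theta_n')(0)=0$ together with Lemma \ref{WPTE10} in the (SDP) case). This yields
\begin{equation}\label{eq:testtheta}
-\int_0^1 a(x)|\theta_n'(x)|^2\,\ud x - i\kappa\lambda_n\pd{u_n',\theta_n} - i\lambda_n\|\theta_n\|^2 = \pd{G_n,\theta_n}.
\end{equation}
Since $\|G_n\|\to 0$ and $\|\theta_n\|\le 1$, the right-hand side tends to $0$, and the first term on the left tends to $0$ by Lemma \ref{lem: est. spec. 1}.

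The key step will be to prove that $\pd{u_n',\theta_n}\to 0$. Because $u_n\in H_0^1(0,1)$, an integration by parts gives $\pd{u_n',\theta_n}=-\pd{u_n,\theta_n'}$, and inserting the weight $\sqrt{a}$ and applying Cauchy--Schwarz yields
\[
\left|\pd{u_n,\theta_n'}\right| \le \left(\int_0^1\frac{|u_n(x)|^2}{a(x)}\,\ud x\right)^{1/2}\left(\int_0^1 a(x)|\theta_n'(x)|^2\,\ud x\right)^{1/2}.
\]
The second factor tends to $0$ by Lemma \ref{lem: est. spec. 1}, so it suffices to bound the weighted integral uniformly in $n$. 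Using $u_n(0)=0$ and Cauchy--Schwarz one has $|u_n(x)|^2\le x\,\|u_n'\|^2\le x$, whence $\int_0^1 |u_n|^2/a\le \int_0^1 \big(x/a(x)\big)\,\ud x$. This last integral is finite under $W\in L^1(0,1)$: in the (WDP) case it is dominated by $\|1/a\|_{L^1(0,1)}$, and in the (SDP) case it equals $\int_0^1 W(x)\,\ud x$ by \eqref{PSTE29}. Consequently $\pd{u_n,\theta_n'}\to 0$, hence $\pd{u_n',\theta_n}\to 0$.

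Finally I would return to \eqref{eq:testtheta}, divide by $-i\lambda_n$ and take the real part. Since $\|\theta_n\|^2$ is real,
\[
\|\theta_n\|^2 = \re\left[\frac{\pd{G_n,\theta_n}+\int_0^1 a(x)|\theta_n'(x)|^2\,\ud x}{-i\lambda_n}\right] - \kappa\,\re\pd{u_n',\theta_n}.
\]
The bracketed term tends to $0$ because its numerator is bounded (indeed tends to $0$) while $|\lambda_n|\to\infty$, and the last term tends to $0$ by the previous step; hence $\int_0^1|\theta_n|^2\,\ud x=\|\theta_n\|^2\to 0$. The main obstacle is the uniform bound on the weighted integral $\int_0^1 |u_n|^2/a$, where the degeneracy of $a$ at $x=0$ could cause divergence; it is exactly the condition $W\in L^1(0,1)$, equivalently $\int_0^1 x/a\,\ud x<\infty$, that saves the estimate, and this is the single place where that hypothesis is truly used.
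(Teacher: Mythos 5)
Your argument is correct, but it is a genuinely different (and considerably heavier) route than the paper's. The paper proves the lemma in two lines without touching the equation again: since $\theta_n(1)=0$, Cauchy--Schwarz with the weight $a$ gives
\[
|\theta_n(x)|^2\le \Bigl(\int_x^1\frac{\ud s}{a(s)}\Bigr)\int_x^1 a(s)|\theta_n'(s)|^2\,\ud s ,
\]
and integrating in $x$ using $W\in L^1(0,1)$ (or $1/a\in L^1$ in the (WDP) case) yields $\|\theta_n\|^2\le C\int_0^1 a|\theta_n'|^2\to 0$ directly from Lemma \ref{lem: est. spec. 1}. You instead re-derive the identity obtained by testing the second equation against $\theta_n$, kill the coupling term $\pd{u_n',\theta_n}$ through the Hardy-type bound $\int_0^1|u_n|^2/a\le\int_0^1 x/a(x)\,\ud x<\infty$, and then divide by $\lambda_n$; all of these steps are sound, and the weighted bound on $u_n/\sqrt{a}$ is a nice observation, but none of it is needed for the conclusion. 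Two small points if you keep your version: (i) the integration by parts $\pd{u_n',\theta_n}=-\pd{u_n,\theta_n'}$ deserves a word in the (SDP) case, where $\theta_n$ is only locally absolutely continuous on $(0,1]$ and may be unbounded at $0$; the boundary term $u_n(\varepsilon)\overline{\theta_n(\varepsilon)}$ does vanish because $|u_n(\varepsilon)|\le\sqrt{\varepsilon}\,\|u_n'\|$ while $|\theta_n(\varepsilon)|\le C\sqrt{W(\varepsilon)}$ and $\varepsilon W(\varepsilon)\to 0$, but this should be said; (ii) the claim that $\int_0^1 x/a\,\ud x<\infty$ is ``equivalent'' to $W\in L^1$ is only exact in the (SDP) case (via \eqref{PSTE29}); in the (WDP) case it simply follows from $1/a\in L^1$. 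Net assessment: correct, but the direct weighted Poincar\'e inequality of the paper is the shorter and more natural argument here.
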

\begin{proof}
First we have
\[
|\theta_n(x)|^2\le \left| \int_x^1\theta'_n(s)\,\ud s  \right|^2\le  \int_x^1a(s)|\theta'_n(s)|^2\,\ud s \int_x^1\frac{\ud s}{a(s)}
\le  C W(x)
%\int_x^1s^\alpha|\theta'_n(s)|^2\,\ud s .
\int_x^1a(s)|\theta'_n(s)|^2\,\ud s .
\]
Integrating this estimate in $x\in (0,1)$ and since $W$ belongs to $L^{1}(0,1)$, one gets
\[
\int_0^1|\theta_n(x)|^2\,\ud x
\le  C\int_0^1W(x)\int_{x}^{1}a(s)|\theta'_n(s)|^2\,\ud s\,\ud x\le C\int_0^1 a(s)|\theta'_n(s)|^2\,\ud s.
\]
We obtain the result from Lemma~\ref{lem: est. spec. 1}.
\end{proof}
%%%%%%%%%%%%%%%%%%%%%%%%%
%
%  Lemma
%
%%%%%%%%%%%%%%%%%%%%%%%%%
\begin{lem}
	\label{lem: est. traces}
Let $(u_n)$ and $(\theta_n)$ be the solution of \eqref{syst: spect 2}, we have
\begin{align*}
&\lambda_n^{-1/2}|u'_n(0)-\kappa\theta_n(0)|\to 0 \text{ as }n\to\infty, \\
&\lambda_n^{-1/2} |u'_n(1)| \to 0 \text{ as }n\to\infty ,\\
&\lambda_n^{-1/2}|(a\theta_n)'(0)|+\lambda_n^{-1/2}|(a\theta_n)'(1)|\to 0 \text{ as }n\to\infty  . %D'AUTRES TERMES.
\end{align*}
\end{lem}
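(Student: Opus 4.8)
The engine of the proof is the one–dimensional Rellich/multiplier boundary identity: for a real $q\in\mathcal{C}^{1}([0,1])$ and any $Z\in H^{1}(0,1)$,
\begin{equation}\label{eq:multiplier-id}
\half\left[q\,|Z|^{2}\right]_{0}^{1}=\half\int_{0}^{1}q'\,|Z|^{2}\,\ud x+\re\int_{0}^{1}q\,Z'\,\overline{Z}\,\ud x .
\end{equation}
The plan is to apply \eqref{eq:multiplier-id} to the two natural fluxes of the system, whose derivatives are controlled in $L^{2}$, with $q$ tuned so that the left-hand side isolates one endpoint value while the right-hand side is $o(\lambda_n)$; each identity then reads $(\text{trace})^{2}=o(\lambda_n)$, i.e. $\lambda_n^{-1/2}|\text{trace}|\to0$. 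First I would record the a priori orders: from the normalisation and $v_n=f_n+i\lambda_n u_n$ with $\|f_n\|\to0$ one gets $\lambda_n\|u_n\|=O(1)$ and $\|u_n'\|=O(1)$; Lemma~\ref{lem: est. spec. 1} gives $\int_0^1 a|\theta_n'|^2\to0$, Lemma~\ref{lem: est. spec. 2} gives $\|\theta_n\|\to0$, and since $a$ is bounded these force $\|a\theta_n'\|\to0$; moreover $\|G_n\|\to0$ while $\|F_n\|/\lambda_n\le\|g_n\|/\lambda_n+\|f_n\|\to0$, i.e. $\|F_n\|=o(\lambda_n)$. I then pass to the good unknowns $P_n:=u_n'-\kappa\theta_n$ and $S_n:=a\theta_n'$, which by \eqref{syst: spect 2} satisfy
\begin{equation}\label{eq:flux-eqs}
P_n'=F_n-\lambda_n^2 u_n,\qquad S_n'=G_n+i\kappa\lambda_n u_n'+i\lambda_n\theta_n ,
\end{equation}
so that $P_n,S_n\in H^1(0,1)$ and both have genuine traces at $0$ and $1$; in particular $u_n'(0)-\kappa\theta_n(0)$ must be read as the trace $P_n(0)$, which is well defined even in (SDP) where $\theta_n(0)$ need not exist.

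For the first claim I apply \eqref{eq:multiplier-id} to $Z=P_n$ with $q(0)=-1$, $q(1)=0$, so the left side is $\half|P_n(0)|^2$. On the right, $\half\int q'|P_n|^2=O(1)$ since $\|P_n\|=O(1)$, and by \eqref{eq:flux-eqs} the term $\re\int q P_n'\overline{P_n}$ splits into $\re\int qF_n\overline{P_n}=O(\|F_n\|)=o(\lambda_n)$ and $-\lambda_n^2\re\int q u_n\overline{P_n}$. Expanding $\overline{P_n}=\overline{u_n'}-\kappa\overline{\theta_n}$, the $u_n\overline{u_n'}$ part integrates by parts (boundary terms vanish as $u_n(0)=u_n(1)=0$) to $\frac{\lambda_n^2}{2}\int q'|u_n|^2=O(1)$, while the $u_n\overline{\theta_n}$ part is $O(\lambda_n^2\|u_n\|\,\|\theta_n\|)=\lambda_n\cdot O(1)\cdot o(1)=o(\lambda_n)$; this is precisely where $\|\theta_n\|\to0$ buys back one power of $\lambda_n$. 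Hence $|P_n(0)|^2=o(\lambda_n)$.

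For $\lambda_n^{-1/2}|u_n'(1)|$ I would multiply the first equation of \eqref{syst: spect 2} by $q\,\overline{u_n'}$ with $q(1)=1$ and $q\equiv0$ near $x=0$; since $q$ vanishes near the degeneracy all integrations by parts occur where $a$ is bounded below, the $x=0$ boundary term is absent, and the left side reduces to $\half|u_n'(1)|^2$. The interior terms $\int q'|u_n'|^2$ and $\lambda_n^2\int q'|u_n|^2$ are $O(1)$, the source gives $O(\|F_n\|)=o(\lambda_n)$, and the singular coupling $\kappa\re\int q\,\theta_n'\overline{u_n'}$ is bounded by $(\int q^2 a^{-1}|u_n'|^2)^{1/2}(\int a|\theta_n'|^2)^{1/2}=O(1)\cdot o(1)$, the first factor being finite exactly because $q$ is supported away from $0$. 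For the thermal traces I apply \eqref{eq:multiplier-id} to $Z=S_n$ with $q$ selecting $x=0$ and then $x=1$; by \eqref{eq:flux-eqs} every right-hand term carries a factor $\|S_n\|\to0$ (e.g. $\kappa\lambda_n\|u_n'\|\,\|S_n\|=o(\lambda_n)$), so $|S_n(0)|^2+|S_n(1)|^2=o(\lambda_n)$. The boundary conditions then identify these flux traces with the quantities in the statement: $\theta_n(1)=0$ gives $(a\theta_n)'(1)=S_n(1)$, and at $x=0$ one has $S_n(0)=0$ in (SDP) while $\theta_n(0)=0$ gives $(a\theta_n)'(0)=S_n(0)$ in (WDP).

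The main obstacle is the interaction of the degeneracy at $x=0$ with the coupling $\kappa\theta_n'$: since $\theta_n'$ behaves like $a^{-1/2}$ there, the transverse flux $u_n'$ has no controllable trace at $0$ on its own, and a naive multiplier diverges. The resolution is structural — replace $u_n'$ by $P_n=u_n'-\kappa\theta_n$, whose derivative in \eqref{eq:flux-eqs} no longer contains $\theta_n'$, and confine the one estimate that really needs $u_n'$ (the trace at the regular endpoint $x=1$) to a multiplier supported away from $0$. Beyond that the difficulty is only the bookkeeping that each boundary term is $o(\lambda_n)$ rather than merely $O(\lambda_n)$, the decisive gain coming uniformly from $\|\theta_n\|\to0$ and $\|a\theta_n'\|\to0$ supplied by Lemmas~\ref{lem: est. spec. 1} and \ref{lem: est. spec. 2}.
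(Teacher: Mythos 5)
Your proposal is correct and follows essentially the same route as the paper: localized multiplier (flux) identities applied to $w_n=u_n'-\kappa\theta_n$ and to $a\theta_n'$ with a cutoff selecting each endpoint, the key cancellation of the $\lambda_n^{2}\re\int q\,u_n\overline{u_n'}$ term via integration by parts and $u_n(0)=u_n(1)=0$, and the $o(\lambda_n)$ gains coming from $\|\theta_n\|\to0$ and $\|a\theta_n'\|\to0$. The only (harmless) deviation is your treatment of $u_n'(1)$ directly with a multiplier supported away from the degeneracy, where the paper instead repeats the $w_n$ argument at $x=1$ and uses $\theta_n(1)=0$; both work.
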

\begin{proof}
%
%$
%||||||||||||||||||||||||||||||||||   \text{ VERSION  A  SUPPRIME   ?????}
%$
%
%We prove the first item, the proof of the second is analogous, using $\theta_n(1)=0$.
%For $x\in(0,1)$, we have from \eqref{syst: spect 2}
%\[
%(u'_n(x)-K\theta_n(x))-(u'_n(0)-K\theta_n(0))=\int_0^x(u_n''(s)-K\theta'_n(s))ds=\int_0^x(-\lambda_n^2u_n(s)+F_n(s))ds.
%\]
%As
%\begin{align}
%&\int_0^x\lambda_n^2|u_n(s)|ds\le C \lambda_n^2\left( \int_0^1|u_n(s)|^2ds\right)^{1/2} \\
%&\int_0^x|F_n(s)|ds\le C\int_0^1|F_n|^2ds
%\end{align}
%$
%||||||||||||||||||||||||||||||||||     \text{ FIN VERSION  A  SUPPRIME   ?????}
%$

We prove the first item, the proof of the second is analogous, using $\theta_n(1)=0$.
From \eqref{syst: spect 2},  $w_n= u_n'-\kappa\theta_n$ and $u_n$ satisfy the system
\begin{equation*}
\begin{cases}
&w_n'+\lambda_n^2 u_n=F_n\\
&u_n'-w_n=\kappa\theta_n.
\end{cases}
\end{equation*}
Let $\varphi\in \mathcal{C}^\infty$ be such that $\varphi(x) =1 $ in a neighborhood of $x=0$ and supported in $x\le 1/2$.
We have 
\begin{align}
	\label{est: trace w}
2\re \pd{ w_n'+\lambda_n^2 u_n ,\varphi w_n}+2\re\pd{u_n'-w_n,\varphi \lambda_n^2 u_n}=2\re\pd{F_n, \varphi w_n}
+2\re\pd{\kappa\theta_n , \varphi \lambda_n^2 u_n }.
\end{align}
As $u_n(0)=0$ we have

\[
2\re \pd{ w_n' ,\varphi w_n}=-|w_n(0)|^2- \pd{\varphi' w_n,w_n} \text{ and }
2\re \pd{ u_n' ,\varphi \lambda_n^2 u_n}=- \lambda_{n}^2 \pd{\varphi' u_n,u_n}.
\]

From \eqref{est: trace w} we deduce
\[
|w_n(0)|^2\le C (  \|w_n\|^2+\lambda_{n}^{2}\|u_{n}\|^{2}+\|F_n\| \|w_n\|+\lambda_n \|\theta_n\| \| \lambda_n u_n\|).
\]
This implies the first item of the lemma, as $  \|w_n\|$ and  $\| \lambda_n u_n\|$ are bounded and $\|\theta_{n}\|$  and 
$\lambda_n^{-1}\|F_n\| $  go to $0$ as $n\to \infty$.

\medskip

For the last property we compute from \eqref{syst: spect 2}
\begin{equation}
2\re \pd{(a\theta_{n}')'-i\kappa \lambda_{n}u_{n}'-i\lambda_{n}\theta_{n}, \varphi a\theta_ {n}'}=2\re\pd{G_n,  \varphi a\theta_ {n}' }.
\end{equation}
We have 
\[
2\re \pd{(a\theta_{n}')', \varphi a\theta_n'}=-|(a\theta_n')(0)|^2-\pd{\varphi' a\theta_n',  a\theta'_n}.
\]
This implies 
\[
|(a\theta_ {n}')(0)|^2\le C(\lambda_n\|u_n'\|\|  a\theta_ {n}' \|+ \lambda_{n}\|\theta_{n}\| \|  a\theta_ {n}' \|+ \|G_n\|  \|  a\theta_ {n}' \|),
\]
this gives the result as $\|u_n'\|$ is bounded and, $\|  a\theta_ {n}' \|$,  $\|\theta_{n}\|$ and  $ \|G_n\| $ goes to 0 as $n\to \infty$.
\end{proof}
Now from the second line of \eqref{syst: spect 2} we have
\begin{equation}\label{PSTE33}
u_n'=\frac{i}{\kappa\lambda_{n}}G_{n}-\frac{1}{\kappa}\theta_{n}-\frac{i}{\kappa\lambda_{n}}(a\theta_n')'.
\end{equation}

Multiplying \eqref{PSTE33} by $\overline{u'}$ and integrating over $(0,1)$ then an integration by parts gives
\begin{align*}
\int_{0}^{1}|u_{n}'|^{2}\,\ud x%&=\frac{i}{\kappa\lambda_{n}}\int_{0}^{1}G_{n}\overline{u'}_{n}\,\ud x-\frac{1}{\kappa}\int_{0}^{1}\theta_{n}\overline{u'}_{n}\,\ud x-\frac{i}{\kappa\lambda_{n}}\int_{0}^{1}(a\theta_{n}')'\overline{u_{n}'}\,\ud x
%\\
&=\frac{i}{\kappa\lambda_{n}}\int_{0}^{1}\! G_{n}\overline{u'}_{n}\,\ud x-\frac{1}{\kappa}\int_{0}^{1}\!\theta_{n}\overline{u'}_{n}\,\ud x-\frac{i}{\kappa\lambda_{n}}\int_{0}^{1}\!(a\theta_{n}')'(\overline{u_{n}'-\kappa\theta_{n}})\,\ud x-\frac{i}{\lambda_{n}}\int_{0}^{1}\!(a\theta_{n}')\overline{\theta_{n}}\,\ud x
\\
&=\frac{i}{\kappa\lambda_{n}}\int_{0}^{1}G_{n}\overline{u'}_{n}\,\ud x-\frac{1}{\kappa}\int_{0}^{1}\theta_{n}\overline{u'}_{n}\,\ud x+\frac{i}{\kappa\lambda_{n}}\int_{0}^{1}(a\theta_{n}')(\overline{u_{n}'-\kappa\theta_{n}})'\,\ud x+\frac{i}{\lambda_{n}}\int_{0}^{1}a|\theta_{n}'|^{2}\,\ud x
\\
&+\frac{i}{\kappa\lambda_{n}}\left((a\theta_{n}')(0)(\overline{u_{n}'-\kappa\theta_{n}})(0)-(a\theta_{n}')(1)(\overline{u_{n}'-\kappa\theta_{n}})(1)\right)
\\
&=\frac{i}{\kappa\lambda_{n}}\int_{0}^{1}G_{n}\overline{u'}_{n}\,\ud x-\frac{1}{\kappa}\int_{0}^{1}\theta_{n}\overline{u'}_{n}\,\ud x-\frac{i\lambda_{n}}{\kappa}\int_{0}^{1}a\theta_{n}'\overline{u_{n}}\,\ud x+\frac{i}{\kappa\lambda_{n}}\int_{0}^{1}a\theta_{n}'\overline{F_{n}}\,\ud x
\\
&+\frac{i}{\lambda_{n}}\int_{0}^{1}a|\theta_{n}'|^{2}\,\ud x+\frac{i}{\kappa\lambda_{n}}\left((a\theta_{n}')(0)(\overline{u_{n}'-\kappa\theta_{n}})(0)-(a\theta_{n}')(1)\overline{u_{n}'}(1)\right),	
\end{align*}
where we have used the first line of \eqref{syst: spect 2}. Therefore, 
\begin{align*}
\int_{0}^{1}|u_{n}'|^{2}\,\ud x&\leq C\bigg(\frac{1}{\lambda_{n}}\int_{0}^{1}|G_{n}|^{2}\,\ud x+\int_{0}^{1}|\theta_{n}|^{2}\,\ud x+\lambda_{n}\left(\int_{0}^{1}|u_{n}|^{2}\,\ud x\right)^{\frac{1}{2}}\left(\int_{0}^{1}a|\theta_{n}'|^{2}\,\ud x\right)^{\frac{1}{2}}
\\
&+\frac{1}{\lambda_{n}}\left(\int_{0}^{1}|F_{n}|^{2}\,\ud x\right)^{\frac{1}{2}}\left(\int_{0}^{1}a|\theta_{n}'|^{2}\,\ud x\right)^{\frac{1}{2}}+\frac{1}{\lambda_{n}}\int_{0}^{1}a|\theta_{n}'|^{2}\,\ud x
\\
&+\frac{1}{\lambda_{n}}\left(|(a\theta_{n}')(0)|.|(u_{n}'-\kappa\theta_{n})(0)|+|(a\theta_{n}')(1)|.|u_{n}'(1)|\right)\bigg).
\end{align*}
Following to Lemma \ref{lem: est. spec. 1}, Lemma \ref{lem: est. spec. 2} and Lemma \ref{lem: est. traces} we deduce that
\begin{equation}\label{PSTE34}
\int_{0}^{1}|u_{n}'|^{2}\,\ud x\to 0\text{ as }n\to \infty.
\end{equation}
Taking the inner product of the first line of \eqref{syst: spect 2} with $u_{n}$, then by integration by parts
\begin{align*}
\lambda_{n}^{2}\int_{0}^{1}|u_{n}|^{2}\,\ud x=\int_{0}^{1}|u_{n}'|^{2}\,\ud x-\kappa\int_{0}^{1}\theta_{n}\overline{u_{n}'}\,\ud x+\int_{0}^{1}F_{n}\overline{u_{n}}\,\ud x.
\end{align*}
By Young's inequality we get
$$
\lambda_{n}^{2}\int_{0}^{1}|u_{n}|^{2}\,\ud x\leq C\left(\int_{0}^{1}|u_{n}'|^{2}\,\ud x+\int_{0}^{1}|\theta_{n}|^{2}\,\ud x+
\frac{1}{\lambda_{n}^2}\int_{0}^{1}|F_{n}|^{2}\,\ud x\right).
$$
Hence, by Lemma \ref{lem: est. spec. 2} and \eqref{PSTE34} we find
$$
\lambda_{n}^{2}\int_{0}^{1}|u_{n}|^{2}\,\ud x\to 0\text{ as }n\to \infty,
$$
which implies from the first line of \eqref{syst: spect 1} that
\begin{equation}\label{PSTE35}
\int_{0}^{1}|v_{n}|^{2}\,\ud x\to 0\text{ as }n\to \infty.
\end{equation}
Finally, the combination of Lemma \ref{lem: est. spec. 2}, \eqref{PSTE34} and \eqref{PSTE35} leads to a contradiction with the assumption that $\|(u_{n},v_{n},\theta_{n})\|_{\mathcal{H}}=1$ for every $n\in\N$ and this conclude the proof of Theorem \ref{th: stabilization}.

\end{document}